\newtheorem{theorem}{Theorem}[section]
\newtheorem{proposition}[theorem]{Proposition}
\newtheorem{lemma}[theorem]{Lemma}
\newtheorem{corollary}[theorem]{Corollary}
\newtheorem{question}[theorem]{Question}
\theoremstyle{definition}
\newtheorem{definition}[theorem]{Definition}
\newcommand{\darrow}{\!\downarrow}
\newcommand{\uarrow}{\!\uparrow}
\newcommand{\la}{\langle}
\newcommand{\ra}{\rangle}
\newcommand{\cmp}[1]{\overline #1}
\newcommand{\restr}{\mbox{\raisebox{.5mm}{$\upharpoonright$}}}
\renewcommand{\leq}{\leqslant}
\newcommand{\fa}{\forall}
\newcommand{\ex}{\exists}
\newcommand{\vph}{\varphi}
\newcommand{\A}{\mathcal{A}}
\newcommand{\K}{\mathcal{K}}
\newcommand{\U}{{\sf{U}}}
\newcommand{\true}{{\sf{T}}}
\newcommand{\false}{{\sf{F}}}
\newcommand{\ifthenelse}{{\text{\sf if-then-else}}}
\newcommand{\zero}{{\text{\sf zero}}}
\newcommand{\dom}{\mathrm{dom}}
\begin{document}

\title[Computability in pca's]
{Computability in partial combinatory algebras}

\author[S. A. Terwijn]{Sebastiaan A. Terwijn}
\address[Sebastiaan A. Terwijn]{Radboud University Nijmegen\\
Department of Mathematics\\
P.O. Box 9010, 6500 GL Nijmegen, the Netherlands.
} \email{terwijn@math.ru.nl}

\begin{abstract} 
We prove a number of elementary facts about computability
in partial combinatory algebras (pca's). 
We disprove a suggestion made by Kreisel about using  
Friedberg numberings to construct extensional pca's.
We then discuss separability and elements without total extensions.
We relate this to Ershov's notion of precompleteness, and we show 
that precomplete numberings are not 1-1 in general.
\end{abstract}

\keywords{partial combinatory algebra, undecidability, 
extensional models, 1-1 numberings}

\subjclass[2010]{%
03D25, 
03B40, 
03D45, 
03D80  
}

\date{\today}

\maketitle

\section{Introduction}

Combinatory algebra was founded by Sch\"{o}nfinkel~\cite{Schoenfinkel}
and Curry~\cite{Curry}, and is closely related to the lambda calculus
(cf.\ Barendregt~\cite{Barendregt}).
Curry attempted to use combinatory algebra as a foundation of mathematics, 
and Church tried the same for the lambda calculus. 
Both attempts fell short (Church's system was inconsistent 
and Curry's was too weak), but the formalisms became important for 
other reasons, for example as foundational theories for the theory of 
computation.  
Partial combinatory algebra (pca) was first studied in 
Feferman~\cite{Feferman} as an axiomatic approach to the theory 
of computation and the study of various constructive theories. 
See Troelstra and van Dalen~\cite{TroelstravanDalenII} for a discussion 
of pca and the relation with constructive mathematics, as well as 
a varied list of models of pca. 
In this paper we discuss computability in pca's and relate this to 
the theory of numberings. 

The work in this paper is related to several other approaches to abstract models
of computation, many of which are discussed in 
Longley and Normann~\cite{LongleyNormann}.
In particular there is the notion of a {\em Basic Recursive Function Theory\/} (BRFT), 
introduced by Wagner and Strong, which is closely related to 
Moschovakis' notion of a precomputation theory (cf.\ Odifreddi~\cite[p222]{Odifreddi}).
Every BRFT gives rise to a pca, as pointed out in \cite[p199]{CockettHofstra}. 
This will be relevant below when we discuss the work of Kreisel. 

Kreisel~\cite{Kreisel1971} eloquently discusses some of the reasons one might 
want to generalize the setting of classical computability theory.
Kreisel's ideas were highly  influential, cf.\ Sacks~\cite{Sacks},
and also the review by Yates~\cite{Yates}.
Hyland wrote \cite{Hyland} as a kind of tribute to \cite{Kreisel1971}.
In section~\ref{sec:Kreisel} we disprove a suggestion made by 
Kreisel in \cite{Kreisel1971} 
(quoted as a theorem in Odifreddi~\cite{Odifreddi}) 
about constructing extensional models, 
using Friedberg's result that the partial computable functions are 
computably enumerable without repetitions. 
We prove that such a construction is impossible.

Cockett and Hofstra \cite{CockettHofstra} discuss category theoretic
approaches to computability theory and pca's. 
They introduce the notion of a Turing category, relaxing restrictions 
in earlier work on so-called recursion categories by 
Di Paola and Heller, and then proceed to show that the study of 
Turing categories is essentially (in a precise sense) equivalent to 
the study of pca's. 

The paper is organized as follows. 
In section~\ref{sec:pca} we list some preliminaries about pca's,
and in section~\ref{sec:comp} we discuss basics of computable and 
computably enumerable (c.e.) sets in pca's. 
In section~\ref{sec:Post} we prove that Post's theorem fails in 
Kleene's second model $\K_2$. 
In section~\ref{sec:relativization} we discuss the halting problem 
and relativization.
In section~\ref{sec:Kreisel} we discuss Kreisel's suggestion about 
Friedberg numberings and extensional pca's, and show that it is impossible. 
In section~\ref{sec:insep}  we discuss inseparable sets, and 
in section~\ref{sec:tot} use this in the discussion of elements 
without total extensions. 
In section~\ref{sec:numberings} we relate this to the theory of numberings, 
and show that precomplete generalized numberings cannot be~1-1. 
In section~\ref{sec:Arslanov} we use the notion of relativization 
to formulate an analog of Arslanov's completeness criterion for pca's, 
motivated by results about the theory of numberings. 

Our notation from computability theory is mostly standard
and follows Odifreddi~\cite{Odifreddi} and Soare~\cite{Soare}.
The natural numbers are denoted by $\omega$. 
$\omega^{<\omega}$ is the set of finite sequences over $\omega$.
$\vph_e$, $e\in\omega$, denotes a standard enumeration of the 
(unary) partial computable (p.c.) functions. 
Notation for pca's is introduced in section~\ref{sec:pca}.
For a partial function $\vph$, $\dom(\vph)$ denotes the set of 
elements where $\vph$ is defined, and 
$\vph\restr x$ denotes its restriction to numbers $n<x$.

\section{Partial combinatory algebras} \label{sec:pca}

\begin{definition} \label{def:pca}
A {\em partial applicative structure\/} (pas) is a set $\A$ together
with a partial map $\cdot$ from $\A\times \A$ to $\A$.  
We also write $ab$ instead of $a\cdot b$, and think of this as 
`$a$ applied to $b$'. If this is defined we denote this by $ab\darrow$.  
By convention, application associates to the left. We write $abc$ instead
of $(ab)c$.  {\em Terms\/} over $\A$ are built from elements of $\A$,
variables, and application. If $t_1$ and $t_2$ are terms then so is
$t_1t_2$.  If $t(x_1,\ldots,x_n)$ is a term with variables $x_i$, and
$a_1,\ldots,a_n \in\A$, then $t(a_1,\ldots,a_n)$ is the term obtained
by substituting the $a_i$ for the~$x_i$.  For closed terms
(i.e.\ terms without variables) $t$ and $s$, we write $t \simeq s$ if
either both are undefined, or both are defined and equal. 
Here application is \emph{strict} in the sense that for $t_1t_2$ to be 
defined, it is required that both $t_1,t_2$ are defined.
We say that an element $f\in \A$ is {\em total\/} if $fa\darrow$ for
every $a\in \A$.

A pas $\A$ is {\em combinatory complete\/} if for any term
$t(x_1,\ldots,x_n,x)$, $0\leq n$, with free variables among
$x_1,\ldots,x_n,x$, there exists a $b\in \A$ such that
for all $a_1,\ldots,a_n,a\in \A$,
\begin{enumerate}[\rm (i)]

\item $ba_1\cdots a_n\darrow$,

\item $ba_1\cdots a_n a \simeq t(a_1,\ldots,a_n,a)$.

\end{enumerate}
A pas $\A$ is a {\em partial combinatory algebra\/} (pca) if
it is combinatory complete.
\end{definition}

Note that combinatory completeness is the analog of the 
S-m-n-theorem (also called the parametrization theorem)
from computability theory, cf.\ Odifreddi~\cite{Odifreddi}.

\begin{theorem} {\rm (Feferman~\cite{Feferman})} \label{Feferman}
A pas $\A$ is a pca if and only if it has elements $k$ and $s$
with the following properties for all $a,b,c\in\A$:
\begin{itemize}

\item $k$ is total and $kab = a$, 

\item $sab\darrow$ and $sabc \simeq ac(bc)$.

\end{itemize}
\end{theorem}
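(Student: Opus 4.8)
The plan is to prove both implications, with the reverse direction being an immediate instantiation of combinatory completeness and the forward direction being the heart of the matter, handled by the technique of bracket abstraction adapted to the partial setting.

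For the direction asserting that every pca has such elements $k$ and $s$, I would simply apply combinatory completeness to two specific terms. Instantiating it (with $n=1$) at $t(x_1,x)=x_1$ yields an element $k$ satisfying $ka_1\darrow$ and $ka_1 a\simeq a_1$ for all $a_1,a$, which says exactly that $k$ is total and $kab=a$. Instantiating it (with $n=2$) at $t(x_1,x_2,x)=x_1x(x_2x)$ yields an element $s$ with $sa_1a_2\darrow$ and $sa_1a_2a\simeq a_1a(a_2a)$, which is precisely $sab\darrow$ and $sabc\simeq ac(bc)$.

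For the forward direction, I would first define, for each term $t$ and each variable $x$, an abstraction term $\la x\ra t$ by recursion on the structure of $t$:
\begin{itemize}
\item $\la x\ra x := skk$;
\item $\la x\ra t := kt$ whenever $x$ does not occur in $t$ (in particular when $t$ is an element of $\A$ or a variable distinct from $x$);
\item $\la x\ra(t_1t_2) := s(\la x\ra t_1)(\la x\ra t_2)$.
\end{itemize}
The two facts to establish by induction on $t$ are that $\la x\ra t$ is always \emph{defined} (and no longer contains $x$ free), and that $(\la x\ra t)\,a\simeq t[a/x]$ for every $a\in\A$. Definedness uses the totality of $k$ in the first two clauses and the axiom $sab\darrow$ in the third; the computation rule follows from $kab=a$ and $sabc\simeq ac(bc)$, with strictness of application together with the use of $\simeq$ ensuring the equivalence is respected even when $t[a/x]$ diverges. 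In the application case, for instance, one computes $s(\la x\ra t_1)(\la x\ra t_2)\,a\simeq (\la x\ra t_1)a\cdot(\la x\ra t_2)a\simeq t_1[a/x]\cdot t_2[a/x]$ by the inductive hypothesis.

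Given a term $t(x_1,\ldots,x_n,x)$, I would then set $b:=\la x_1\ra\la x_2\ra\cdots\la x_n\ra\la x\ra t$. Since each abstraction step produces a defined element independent of the abstracted variable, $b$ is a defined element of $\A$, and repeatedly applying the computation rule gives $ba_1\cdots a_n\darrow$ and $ba_1\cdots a_n a\simeq t(a_1,\ldots,a_n,a)$, establishing (i) and (ii). The main obstacle, and the point at which the partial setting genuinely departs from classical combinatory logic, is maintaining the invariant that every abstraction is \emph{total} in the abstracted variable: it is exactly to guarantee $(\la x\ra t)\,a\darrow$ for all $a$, irrespective of whether $t[a/x]$ converges, that one needs both the totality of $k$ and the unconditional convergence $sab\darrow$, rather than merely the equation $sabc\simeq ac(bc)$.
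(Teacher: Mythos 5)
Your right-to-left direction is fine, and it is the easy half: instantiating combinatory completeness at $t(x_1,x)=x_1$ (with $n=1$) and at $t(x_1,x_2,x)=x_1x(x_2x)$ (with $n=2$) gives exactly the required $k$ and $s$. (The paper itself gives no proof of this theorem -- it is cited from Feferman -- so your argument has to stand on its own; the bracket-abstraction strategy you chose for the converse is indeed the standard one, and it is the construction underlying the paper's subsequent discussion of $\lambda^*$-terms.)

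There is, however, a genuine flaw in your abstraction clauses, and it sits exactly at the point you yourself single out as ``the main obstacle.'' Your second clause, ``$\la x\ra t := kt$ whenever $x$ does not occur in $t$,'' is the optimization that is harmless in \emph{total} combinatory algebra but fails in the partial setting. First, it clashes with your third clause (an $x$-free application matches both, with different outcomes), so a precedence must be chosen; and under the natural reading in which the $k$-clause has priority, the totality invariant you claim to prove by induction is simply false: if $t$ is a compound $x$-free term, $kt$ is defined only when $t$ is, by strictness, so ``totality of $k$'' does not give definedness of $\la x\ra t$. Worse, the failure propagates to clause (i) of combinatory completeness. Take $t(x_1,x)=x_1x_1$. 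Your clauses give $\la x\ra(x_1x_1)=k(x_1x_1)$ and then $b=\la x_1\ra\bigl(k(x_1x_1)\bigr)=s(kk)(s(skk)(skk))$, for which one computes $ba_1\simeq k(a_1a_1)$. This is undefined whenever $a_1a_1\uarrow$ (in $\K_1$, take $a_1$ a code of the nowhere-defined function, so that $a_1\cdot a_1=\vph_{a_1}(a_1)\uarrow$), violating the requirement $ba_1\darrow$. Note that your clause (ii) still holds for this $b$; what breaks is precisely the definedness clause (i) that distinguishes pca's from total combinatory algebras.

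The repair is standard and small: restrict the $k$-clause to \emph{atoms} (elements of $\A$ and variables other than $x$), and send every application $t_1t_2$ -- whether or not it contains $x$ -- through the $s$-clause. Then every abstract is built solely from pieces of the forms $ku$ and $suv$ with $u,v$ already known to denote, so totality of $k$ and the axiom $sab\darrow$ do yield the invariant that $\la x\ra t$ denotes under every substitution for its remaining variables; all evaluation of $t$ is postponed until the final argument arrives, and your induction and the final step $b:=\la x_1\ra\cdots\la x_n\ra\la x\ra t$ then go through verbatim.
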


Note that $k$ and $s$ are nothing but partial versions of the 
familiar combinators from combinatory algebra.
As noted in \cite[p95]{Feferman}, Theorem~\ref{Feferman} has the 
consequence that in any pca we can define lambda-terms in the usual 
way (cf.\ Barendregt~\cite[p152]{Barendregt}):\footnote{%
Because the lambda-terms in combinatory algebra do not have
the same substitution properties as in the lambda calculus, 
we use the notation $\lambda^*$ rather than~$\lambda$,
cf.\ Barendregt~\cite[p152]{Barendregt}.
E.g.\ (\cite[p84]{LongleyNormann}) the terms 
$\lambda x.(\lambda y.y)x)$ and $\lambda x.x$ are $\beta$-equivalent, 
but their $\lambda^*$-versions are $s(ki)i$ and $i$, and these 
are in general different elements in a pca.} 
For every term $t(x_1,\ldots,x_n,x)$, $0\leq n$, with free variables among
$x_1,\ldots,x_n,x$, there exists a term $\lambda^* x.t$ 
with variables among $x_1,\ldots,x_n$,
with the property that for all $a_1,\ldots,a_n,a\in\A$,
\begin{itemize}

\item $(\lambda^* x.t)(a_1,\ldots, a_n)\darrow$,

\item $(\lambda^* x.t)(a_1,\ldots, a_n)a \simeq t(a_1,\ldots,a_n,a)$.

\end{itemize}

The most famous examples of a pca are Kleene's first and second 
models $\K_1$ and $\K_2$. 
$\K_1$ consists of the natural numbers $\omega$, with application
defined as $n\cdot m = \vph_n(m)$. So this is essentially the 
setting of classical computability theory. 
$\K_2$ is defined on $\omega^\omega$, with application $\alpha\cdot\beta$ 
defined by applying the continuous functional with code $\alpha$ to the 
real $\beta$. See Longley and Normann~\cite{LongleyNormann} for more 
details. Many other examples of pca's can be found in the books by 
Beeson~\cite{Beeson}, Odifreddi~\cite{Odifreddi}, and van Oosten~\cite{vanOosten}.

The presence of the $\lambda^*$-terms and the combinators allows for the 
following definitions in any pca (cf.\ Barendregt~\cite[p44]{Barendregt} 
and van Oosten \cite{vanOosten}):
The Booleans true and false can be defined as 
$\true = \lambda^* xy.x = k$ and $\false = \lambda^* xy.y = ki$, 
where $i=skk$.
We can implement definition by cases using an element $\ifthenelse$ 
with the property
$\ifthenelse \true ab = a$ and 
$\ifthenelse \false ab = b$.
Namely, we can simply take $\ifthenelse = i$. 
This also gives the Boolean operations,  
for example 
\begin{align*}
{\sf not} a &= \ifthenelse a\false\true, \text{and} \\
{\sf and} ab &= \ifthenelse a (\ifthenelse b\true\false)\false.
\end{align*}

Coding of sequences is a standard device in the lambda calculus. 
Using the $\lambda^*$-terms available in any pca, we can 
code $n$-tuples $(a_1,\ldots,a_n)$ by 
$\langle a_1,\ldots, a_n\rangle=\lambda^* z.za_1\ldots a_n$.
The inverse projection functions can be defined as 
$\U^n_i=\lambda^* u_1\ldots u_n.u_i$, so that 
$$
\langle a_1,\ldots, a_n\rangle \U^n_i=a_i.
$$

There are various ways to define the natural numbers 
$\bar 0, \bar 1, \bar 2,\ldots$ in a pca. 
A convenient way is to define   
$\bar 0 = i$, and $\overline{n+1} = \la \false,\bar n\ra$, 
cf.\ Barendregt~\cite[p44]{Barendregt}.

All the above can be defined in any pca, but they may trivialize
if $|\A| = 1$. 
van Oosten~\cite[p11]{vanOosten} calls $\A$ {\em nontrivial\/} if $|\A|>1$. 
We note that $n=1$ is the only possible cardinality for a finite pca:

\begin{proposition}
Suppose that a pca $\A$ is finite. Then $|\A|=1$.
\end{proposition}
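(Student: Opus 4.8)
The plan is to prove the contrapositive: if $\A$ is nontrivial, that is $|\A|>1$, then $\A$ is infinite. The seed of the whole argument is the observation that nontriviality forces $\true\neq\false$. Indeed, recall that $\true=k$ and $\false=ki$, so that for all $a,b\in\A$ we have $\true\,a\,b = kab = a$, while $\false\,a\,b = ki\,a\,b = i\,b = b$. Hence if $\true=\false$, then $a=b$ for all $a,b\in\A$, which gives $|\A|=1$. So in a nontrivial pca we necessarily have $\true\neq\false$.

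Next I would exhibit an explicit infinite family of pairwise distinct elements, for which the numerals $\bar 0,\bar 1,\bar 2,\ldots$ are the natural candidates. To separate them I would define, using the $\lambda^*$-terms available in any pca, a zero-test $\zero = \lambda^* x.\, x\,k$ and a predecessor $P = \lambda^* x.\, x\,\U^2_2$. One then checks by direct reduction that these behave as intended on numerals: since $\bar 0 = i$ we get $\zero\,\bar 0 = i\,k = k = \true$, while since $\overline{n+1} = \la\false,\bar n\ra = \lambda^* z.\, z\,\false\,\bar n$ we get $\zero\,\overline{n+1} = \overline{n+1}\,k = k\,\false\,\bar n = \false$; similarly $P\,\overline{n+1} = \overline{n+1}\,\U^2_2 = \bar n$.

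With these in hand the conclusion is a short contradiction. Suppose $\bar m = \bar n$ for some $m<n$. Applying $P$ exactly $m$ times to both sides yields $\bar 0 = \overline{n-m}$, and since $n-m\geq 1$ the right-hand side is a successor numeral. Applying $\zero$ then gives $\true = \zero\,\bar 0 = \zero\,\overline{n-m} = \false$, contradicting $\true\neq\false$. Hence the numerals are pairwise distinct, so $\A$ is infinite, which is exactly what the contrapositive requires.

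I expect the only delicate point to be the verification that the zero-test and predecessor act correctly on the numerals, since this relies on the interaction between the sequence coding $\la\,\cdot\,\ra$, the projections $\U^2_i$, and the partiality of application. However, all the relevant applications are applications of combinators and are therefore total and defined, so the reductions go through routinely and no convergence issues arise. The remainder of the argument is entirely elementary.
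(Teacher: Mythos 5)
Your proof is correct, but it takes a genuinely different route from the paper. The paper argues by a direct counting argument: if $\A=\{a_1,\ldots,a_n\}$ with $n>1$, then the constant functions $ka_1,\ldots,ka_n$ are $n$ distinct elements (since $ka_ib=a_i$), and the identity $i=skk$ is an $(n+1)$-st element because it is not constant when $n>1$; this contradicts $|\A|=n$. Your argument instead proves the contrapositive by exhibiting an explicit infinite family: you first observe that nontriviality forces $\true\neq\false$ (the analogue of the paper's observation that $i$ is not constant), and then separate the numerals $\bar 0,\bar 1,\bar 2,\ldots$ pairwise using the zero-test $\zero$ and a predecessor built from $\U^2_2$; your verifications of $\zero\,\bar 0=\true$, $\zero\,\overline{n+1}=\false$, and $P\,\overline{n+1}=\bar n$ are all sound, and the definedness issues are indeed harmless since every application involved reduces via total combinators. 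The trade-off: the paper's proof is shorter and needs nothing beyond $k$ and $s$, whereas yours requires the sequence coding, projections, and $\lambda^*$-machinery, but in exchange it yields strictly more information --- namely that the numerals inject a canonical copy of $\omega$ into every nontrivial pca, a fact that is independently useful (it is implicitly what justifies using $\bar 0,\bar 1$ as distinct truth values elsewhere in the paper).
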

\begin{proof}
Note that every pca is nonempty, since by Feferman's Theorem~\ref{Feferman} 
it has to contain the combinators $k$ and~$s$. 
Furthermore, there exists a (total) pca with precisely one element $a$, 
with application $aa\darrow = a$. 
In this pca we have $s = k = a$. 
Since all $\lambda^*$-terms are equal to $a$, also $\true = \false = a$.

Now suppose that $\A = \{a_1,\ldots, a_n\}$, and $n>1$.
$\A$ contains the elements $ka_1, \ldots, ka_n$, which are $n$ 
distinct constant functions since $ka_i b = a_i$. 
$\A$ also contains the identity function $i = skk$, which is not 
a constant function since $n>1$. So $\A$ has at least $n+1$ elements, 
a contradiction.
\end{proof}

Following \cite{vanOosten2006}, we say that a partial function 
$\vph:\A\rightarrow \A$ is {\em representable\/} in $\A$ if there 
is an element $r\in\A$ such that for every $a\in\dom(\vph)$, 
$ra\darrow = \vph(a)$. 
We have a similar definition for multivariate functions.

\section{Computable sets and c.e. sets in pca's}\label{sec:comp}

The following definition is taken from 
van Oosten and Voorneveld \cite{vanOostenVoorneveld}, 
which in turn is based on Longley~\cite{Longley}.

\begin{definition} \label{def:decidable}
Let $\A$ be a pca, and $A\subseteq\A$. 
$A$ is called {\em decidable\/} in $\A$ if there exists a total 
$c\in \A$ such that for every $a\in\A$, 
\begin{align*}
ca = \true  &\Longleftrightarrow a\in A, \\
ca = \false &\Longleftrightarrow a\notin A.
\end{align*}
Note that this is equivalent to saying that the characteristic 
function $\chi_A : \A \rightarrow \{\true,\false\}$ 
of $A$ is representable in $\A$. 
Instead of $\true$ and $\false$, we may equivalently use 
$0$ and $1$, cf.\ Proposition~\ref{prop:TF01}.
\end{definition}

We can also easily define the analog of c.e.\ sets in the following way.

\begin{definition}
We say that $A$ is {\em computably enumerable (c.e.)\/} in $\A$ if 
there exists $e\in\A$ such that 
$$
A = \dom(e) = \{a\in\A \mid ea\darrow\}.
$$
\end{definition}

Note that this notion is not very useful in total pca's, 
since there $\A$ itself is the only c.e.\ set.\footnote{
In total pca's, such as the lambda calculus, one can represent 
`undefined' in other way's, for example using terms without 
normal form, cf.\ Barendregt~\cite{Barendregt1992}.} 
For nontotal pca's we have the following result.

\begin{proposition} \label{decimpce}
In nontotal pca's, decidable sets are c.e.
The converse implication does not hold in general. 
\end{proposition}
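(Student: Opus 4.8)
The plan is to prove the two implications separately. For the forward direction, suppose $A$ is decidable via a total $c \in \A$, so that $ca = \true$ exactly when $a \in A$ and $ca = \false$ otherwise. I want to produce $e \in \A$ with $\dom(e) = A$, that is, $ea\darrow$ iff $a \in A$. The naive idea is to branch on $ca$ and return a defined value in one case and an undefined one in the other, but this runs directly into the strictness of application. Selecting between two elements, as in $ca\,X\,Y$ (or $\ifthenelse(ca)\,X\,Y$), forces both $X$ and $Y$ to be defined for the selection even to be evaluated, so such a term can never diverge. The key step is therefore to \emph{delay} the divergence: I select between two total elements and only afterwards apply the result to a dummy argument, arranging that one branch then converges while the other diverges.

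First I would use nontotality to manufacture a uniformly divergent element. Since $\A$ is nontotal, fix $f, b \in \A$ with $fb\uarrow$, and set $D = \lambda^* x.\,fb$; concretely this is $s(kf)(kb)$, which is defined because $sab\darrow$ always, while $Dv \simeq fb\uarrow$ for every $v$. Next I would define
\[
e = \lambda^* a.\,\bigl(ca\,k\,D\bigr)\bar 0 .
\]
If $a \in A$ then $ca = \true = k$, so $ca\,k\,D = kkD = k$ and $ea \simeq k\bar 0\darrow$; if $a \notin A$ then $ca = \false = ki$, so $ca\,k\,D = iD = D$ and $ea \simeq D\bar 0\uarrow$. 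Hence $\dom(e) = A$, and $A$ is c.e. The only point requiring care is that $ea \simeq (ca\,k\,D)\bar 0$, which is immediate from the defining property of $\lambda^*$-terms.

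For the converse I would exhibit a counterexample rather than argue in general, the natural candidate being $\K_1$, which is nontotal. In $\K_1$ a set $A$ is c.e. in the present sense iff $A = \dom(\vph_e) = \{a : \vph_e(a)\darrow\}$ for some $e$, i.e. iff $A$ is classically c.e.; and $A$ is decidable iff there is a total $\vph_c$ with $\vph_c(a) = \true$ for $a \in A$ and $\vph_c(a) = \false$ for $a \notin A$, i.e. iff the characteristic function of $A$ is computable. Taking $A$ to be the halting problem then gives a set that is c.e. but not decidable in $\K_1$, so the converse implication fails.

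The main obstacle is the strictness issue in the forward direction: one must recognize that divergence cannot be placed in a branch that is itself an argument of a selection, and instead push it behind an extra application after the branch has been chosen. Producing the uniformly divergent $D$ from mere nontotality, via the $s$-$k$ form of the abstraction so that $D$ itself is defined, is the technical heart of the argument. The converse is then routine once $\K_1$ is identified as the right model.
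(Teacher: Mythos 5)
Your proof is correct, and although it follows the same skeleton as the paper's --- manufacture a uniformly divergent element from nontotality as $s(kf)(kg)$, branch on $ca$ to turn the decision into convergence versus divergence, and refute the converse via the halting problem --- it differs at exactly the step where care is needed, and there it is actually \emph{more} rigorous than the paper. The paper's own proof sets $ea = \ifthenelse(ca)\,0\,(ha)$, where $ha\uarrow$ for all $a$, and asserts that this is undefined precisely when $a\notin A$; but under the paper's own strictness convention the term $\ifthenelse(ca)\,0\,(ha)$ has $ha$ as an argument of an application, so as written it is undefined for \emph{every} $a$ and the displayed $e$ has empty domain. Your delayed-application term $e = \lambda^* a.\,(ca\,k\,D)\bar 0$ --- select between the two \emph{total} elements $k$ and $D$ first, and only then apply the selected element to a dummy argument --- is the standard repair of this slip, and your computations ($\true\,k\,D = kkD = k$ and $\false\,k\,D = iD = D$, then $k\bar 0\darrow$ versus $D\bar 0\uarrow$) are all correct, including the check that $D = s(kf)(kb)$ is itself defined. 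For the converse, the paper forward-references its proposition that the halting problem $H$ is c.e.\ but undecidable in every nontotal pca (whose proof, incidentally, contains the same strictness issue), whereas you specialize to $\K_1$ and invoke the classical halting problem; since the statement only claims the converse fails in general, your single concrete counterexample suffices, and your identification of pca-decidability and pca-c.e.-ness in $\K_1$ with the classical notions is sound.
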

\begin{proof}
Suppose that $\A$ is a nontotal pca, and that $A\subseteq\A$ and 
$c\in\A$ are as in Definition~\ref{def:decidable} above.
First note that $\A$ contains a totally undefined function. 
Namely, since $\A$ is nontotal, there are $f,g\in\A$ such that $fg\uarrow$.
Now define $h=\lambda^* x.fg = s(kf)(kg)$. 
Then $ha\uarrow$ for every~$a\in\A$.

Now define $ea = \ifthenelse (ca) 0 (ha)$. 
This yields $0$ if $ca = \true$, so if $a\in A$, 
and $ha$, which is undefined, otherwise. 
Hence $ea\darrow$ if and only if $a\in A$. 

For the converse implication, 
c.e.\ sets are not always decidable by 
Proposition~\ref{HP}. 
\end{proof}

Definition~\ref{def:decidable} uses $\true,\false$ as truth values.
In mathematics it is customary to use $0,1$ as values of 
characteristic functions. We show that in the context of pca's, 
we may equivalently use the numerals $\bar 0,\bar 1$ 
as truth values.

\begin{proposition} \label{prop:TF01}
Let $\A$ be a pca. 
There exists $c,d\in\A$ such that  
\begin{align*}
c\true  &= \bar 1 & d\bar 1 &= \true \\
c\false &= \bar 0 & d\bar 0 &= \false.
\end{align*}
\end{proposition}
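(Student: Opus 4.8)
The plan is to exploit the fact that the Booleans act as two-way selectors. Since $\true = k$ and $\false = ki$ with $i = skk$, Feferman's Theorem~\ref{Feferman} gives $\true ab = kab = a$ and $\false ab = kiab = ib = b$ for all $a,b\in\A$. These two identities are the engine behind both directions.

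For the map $c$, I would simply set $c = \lambda^* x.\, x\bar 1\bar 0$. Feeding in the two Booleans and using the selector identities yields $c\true = \true\,\bar 1\,\bar 0 = \bar 1$ and $c\false = \false\,\bar 1\,\bar 0 = \bar 0$, exactly as required. Definedness of the relevant applications is automatic from the $\lambda^*$-property, since both outputs are the defined elements $\bar 1$ and $\bar 0$.

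The direction $d$ is the one that needs a genuine computation on the numeral encoding, and is where I expect the only real work to lie. I would first determine how the numerals $\bar 0 = i$ and $\bar 1 = \la\false,\bar 0\ra$ behave under application to $\true$. Recalling that $\la a,b\ra = \lambda^* z.\,zab$, so that $\la a,b\ra c = cab$, one computes $\bar 0\true = i\true = \true$ on the one hand, and $\bar 1\true = \la\false,\bar 0\ra\true = \true\,\false\,\bar 0 = \false$ on the other, the last step again by the selector identity for $\true$. Thus applying a numeral to $\true$ performs a zero-test, but with the polarity opposite to what is wanted. I would therefore correct the polarity by negating, setting $d = \lambda^* x.\,{\sf not}(x\true)$, equivalently $d = \lambda^* x.\,(x\true)\false\true$. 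Then $d\bar 0 = {\sf not}(\true) = \false$ and $d\bar 1 = {\sf not}(\false) = \true$, completing the proof.

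The only subtlety is bookkeeping the numeral encoding correctly in the computation of $\bar 1\true$; everything else reduces to the selector identities and the defining property of $\lambda^*$-abstraction.
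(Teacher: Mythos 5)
Your proof is correct and takes essentially the same approach as the paper: for $c$ you use the Boolean-as-selector applied to $\bar 1$ and $\bar 0$ (the paper phrases this with $\ifthenelse$, which is just $i$, so the terms coincide), and for $d$ you use the zero-test $\lambda^* x.\,x\true$ composed with ${\sf not}$, exactly the paper's $d = \lambda^* x.\,{\sf not}(\zero x)$. The only difference is that you verify the behaviour of the numerals under the zero-test by direct computation, where the paper cites Barendregt; your computation is correct.
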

\begin{proof}
It is easy to check that 
$c = \lambda^* z. \ifthenelse z \bar 1 \bar 0$
satisfies the first part of the proposition. 

For the second part, note that the term 
$\zero = \lambda^* x. x\true$ has the property 
$\zero \bar 0 = \true$ and $\zero \bar 1 = \false$ 
(cf.\ \cite[p134]{Barendregt})
so we can take $d$ to be the term $\lambda^* x. {\sf not}(\zero x)$.
\end{proof}

Since in every pca there are elements mapping $\true$ and $\false$ to 
$\bar 1$ and $\bar 0$, and vice versa, we may equivalently use 
$\bar 1$ and $\bar 0$ in Definition~\ref{def:decidable}. 
From now on we will mostly use the latter, and simply write $0$ and~$1$ 
for the values of characteristic functions.

\section{A counterexample to Post's theorem}\label{sec:Post}

Post's theorem is the statement that for $A\subseteq\omega$, 
if both $A$ and its complement $\cmp{A}$ are c.e., then 
$A$ is decidable. To decide whether $x\in A$, simply enumerate 
both $A$ and $\cmp{A}$ until $x$ appears in one of them. 
This works because in $\omega$, c.e.\ sets have finite 
approximations, and if $x\in A$ then $x$ appears in $A$ 
after finitely many steps. 
In general, we do not have a good notion of approximation in pca's, 
and being ``enumerated'' into a c.e.\ set does not have to happen 
in finitely many stages in every pca. Hence there does not seem to 
be a reason why Post's theorem should hold in general. 
Indeed we now show that it fails in Kleene's second model~$\K_2$.
 
\begin{proposition} \label{Postfails}
Post's theorem fails in~$\K_2$.
\end{proposition}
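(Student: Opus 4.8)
The plan is to exhibit a set $A\subseteq\K_2$ such that both $A$ and its complement $\cmp{A}$ are c.e.\ in the sense defined above, yet $A$ is not decidable. The natural place to find such a counterexample is inside the structure of $\K_2$ itself, whose underlying set is $\omega^\omega$ and whose application is given by the continuous functionals. The crucial point, which makes Post's theorem fail, is that decidability requires a \emph{total} element $c$ deciding membership via $\true,\false$, whereas the continuous nature of application in $\K_2$ forces any such total element to be determined by finite initial segments of its input. I would therefore look for a decision problem about $\alpha\in\omega^\omega$ whose answer genuinely depends on the whole infinite sequence $\alpha$, so that no continuous (hence finitely-determined) total functional can compute it, while at the same time each of the two ``yes/no'' halves can be witnessed by a partial element that simply diverges on the wrong inputs.

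Concretely, first I would fix a candidate such as the set $A$ of $\alpha$ satisfying some $\Pi^0_1$- or $\Sigma^0_1$-type condition that is naturally semidecidable from both sides in the continuous setting but not decidable by a single total functional --- for instance a condition asserting the existence of some $n$ with $\alpha(n)$ in a prescribed range versus its negation. The key property I want is that membership in $A$ can be recognized by searching through the values of $\alpha$ (giving c.e.-ness of $A$ via an element $e$ with $A=\dom(e)$), and membership in $\cmp{A}$ can likewise be recognized by a dual search (giving c.e.-ness of $\cmp{A}$). Each search is implemented by a continuous functional that converges exactly when the witness is found and diverges otherwise, which is precisely the $\dom(\cdot)$ formulation of c.e.\ from the earlier definition.

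Then I would argue non-decidability directly from continuity. Suppose for contradiction that some total $c\in\K_2$ decides $A$, outputting $\true$ on $A$ and $\false$ on $\cmp{A}$. Because $c$ represents a continuous functional and is total, for each input $\alpha$ the value $c\alpha$ is determined by a finite initial segment $\alpha\restr x$ of $\alpha$. I would then construct two inputs agreeing on a sufficiently long initial segment --- long enough to force $c$ to commit to the same Boolean output --- but lying on opposite sides of the partition $A,\cmp{A}$. This contradicts the assumption that $c$ correctly separates $A$ from $\cmp{A}$, and hence no total deciding element exists.

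The main obstacle I anticipate is twofold. First, I must choose the set $A$ so that the two c.e.\ witnesses really exist as genuine $\K_2$-elements, i.e.\ continuous functionals whose domains are exactly $A$ and $\cmp{A}$; this requires that the two semidecision procedures be \emph{continuous} in the $\K_2$ sense, not merely arithmetically definable, and that together they still fail to give a total decider. Second, the continuity/compactness argument must be made precise against the exact application convention of $\K_2$: I need to know that a total element of $\K_2$ really is finitely determined on each input (so that the ``agree on a long initial segment'' construction forces equal outputs), and to ensure my two opposite-side inputs can be made to share an arbitrarily long common prefix. Pinning down this finite-determination property for total elements of $\K_2$, and matching it cleanly to the strict application convention, is where I expect the real work to lie.
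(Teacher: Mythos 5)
Your overall strategy is the same as the paper's: take an existential/universal pair such as $A=\{\alpha \mid \ex n\, \alpha(n)=1\}$ versus its complement (the paper uses the complementary pair, with $A=\{\bar 0\}$ the all-zero sequence), verify that both halves are c.e.\ in $\K_2$, and refute decidability by continuity of total elements --- two inputs on opposite sides of the partition sharing a sufficiently long common prefix must receive the same Boolean value. That last argument is exactly the paper's and is sound as you describe it.

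The genuine gap is in your account of the c.e.\ witnesses. You say each half is recognized by a (dual) search ``implemented by a continuous functional that converges exactly when the witness is found and diverges otherwise.'' A functional that converges exactly when finite evidence appears has an \emph{open} domain, so this picture works only for the existential half. The universal half --- the closed set $\cmp{A}=\{\alpha \mid \fa n\, \alpha(n)\neq 1\}$, equivalently the paper's $\{\bar 0\}$ --- has no finite witnesses at all, and no search of that kind can have it as its domain. Worse, if both halves really were domains of such ``halting'' searches, then $A$ would be clopen; the characteristic function of a clopen set is locally constant, hence continuous, hence has an associate and is represented by a total element of $\K_2$ (no effectivity is required, since the carrier of $\K_2$ is all of $\omega^\omega$), so $A$ would be decidable after all and your counterexample would collapse. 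The missing idea is that c.e.\ sets in $\K_2$ are more general than open sets (domains are $\Pi^0_2$, i.e.\ $G_\delta$): the witness for the closed half is a functional that emits its output coordinate by coordinate for as long as the input has shown no forbidden value, and diverges as soon as one appears --- the paper's ``copying'' functional $\hat\alpha(0^n)=0^n$, undefined on all other strings, whose domain is exactly $\{\bar 0\}$. Convergence there is a global, infinitary condition, never certified at a finite stage. You flag this as where ``the real work'' lies, and indeed it is precisely the step your plan leaves unresolved and, as literally described, would fail.
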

\begin{proof} 
Let $\bar{0}$ denote the all zero sequence in $2^\omega$, 
and let $\bar{1}$ denote the all one sequence. 

Let $A = \{\bar{0}\}$. 
Then $A$ is c.e.\ in $\K_2$: 
Define $\hat\alpha: 2^{<\omega} \rightarrow 2^\omega$ by
$$
\hat\alpha(x) = 
\begin{cases}
0^n &\text{if $x = 0^n$}\\
\uparrow &\text{otherwise}.
\end{cases}
$$
$\hat\alpha$ defines a partial computable functional 
$\alpha:2^\omega\rightarrow 2^\omega$ that simply copies 
the input, as long as the input consists of only zeros, 
and becomes undefined otherwise. 
Hence $\dom(\alpha) = \bar{0}$, which shows that $A$ is c.e.

The complement $\cmp{A} = \{r\in 2^\omega : r\neq\bar{0}\}$ 
is also c.e.:
Define 
$$
\hat\beta(x) = 
\begin{cases}
1^n &\text{if $n=|x|$ and $x(i)=1$ for some $i<n$,}\\
\uparrow &\text{otherwise}.
\end{cases}
$$
Then $\hat\beta$ defines a partial computable functional 
$\beta$ with $\dom(\beta) = \cmp{A}$.

So both $A$ and $\cmp{A}$ are c.e., but 
$A$ is not decidable in $\K_2$. Suppose that it were, 
and suppose that $\gamma$ is a computable functional 
such that 
$\gamma(x) = \true$ if $x = \bar{0}$, and 
$\gamma(x) = \false$ if $x \neq \bar{0}$.
Now since $\gamma$ is continuous, this distinction has to 
be made on the basis of a finite initial segment of $x$, 
which is impossible. Note that for this argument it does 
not really matter what $\true$ and $\false$ are, as long 
as they are distinct reals in $2^\omega$. 
\end{proof}

\section{Reductions and relativization} \label{sec:relativization}

In Beeson~\cite[p107]{Beeson} it is already remarked that besides 
the existence of a universal function and the undecidability of the 
halting problem, not many analogues of classical results in 
computability theory can be proved. 
We claim no originality for the results in this section, but 
for the record discuss the m-completeness of the halting problem. 
This was surely known to people working in axiomatic recursion theory, 
but since we have not been able to locate it in the literature, 
we include it here. In any case, it is an easy fact that is 
completely analogous to Turing's classical result.

We can define the analog of the halting problem in any pca~$\A$ using the 
coding of sequences. 
Define 
$$
H = \{\la a,b\ra \mid ab\darrow\}.
$$ 

\begin{proposition} \label{HP}
For every nontotal pca $\A$, $H$ is undecidable and c.e.\ in~$\A$. 
\end{proposition}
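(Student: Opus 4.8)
The plan is to prove the two assertions separately. For c.e.-ness of $H$, I would exhibit a single element $u\in\A$ whose domain is exactly $H$. The natural candidate is a ``universal'' evaluator that, on input $\la a,b\ra$, unpacks the pair and applies $a$ to $b$. Using the projection functions $\U^2_1,\U^2_2$ and the pairing from section~\ref{sec:pca}, I would set $u=\lambda^* z.(z\U^2_1)(z\U^2_2)$, so that $u\la a,b\ra \simeq ab$. By combinatory completeness $u\darrow$ and $uz\darrow$ for every $z$, while the strictness of application guarantees that $u\la a,b\ra\darrow$ holds precisely when $ab\darrow$. Hence $\dom(u)=H$ up to the mild point that $\dom(u)$ might literally contain non-pair elements $z$; one checks that on $z=\la a,b\ra$ the behaviour is as claimed, which is all that is needed since $H$ is defined in terms of coded pairs.

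For undecidability I would run the classical diagonal argument, which is exactly where the hypothesis of nontotality is used. Suppose toward a contradiction that $H$ is decidable, witnessed by a total $c\in\A$ with $c\la a,b\ra = \bar 1$ iff $ab\darrow$. As in the proof of Proposition~\ref{decimpce}, since $\A$ is nontotal there is a totally undefined element $h$ with $ha\uarrow$ for all $a$. Using $\ifthenelse$ together with the decoder $d$ of Proposition~\ref{prop:TF01} (or working directly with $\bar 0,\bar 1$), I would build a ``spoiler'' element $g$ satisfying
\begin{equation*}
ga \simeq \ifthenelse\big(d(c\la a,a\ra)\big)\,(ha)\,\bar 0,
\end{equation*}
so that $ga\uarrow$ when $aa\darrow$ and $ga\darrow$ when $aa\uarrow$. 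The term on the right is built from $\lambda^*$-abstraction and the totally undefined $h$, so $g\in\A$ by combinatory completeness.

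Evaluating at the diagonal point $a=g$ then yields the contradiction: $gg\darrow$ forces $c\la g,g\ra=\bar 1$, which makes the right-hand side reduce to $hg$ and hence $gg\uarrow$; conversely $gg\uarrow$ forces $c\la g,g\ra=\bar 0$, which makes the right-hand side reduce to $\bar 0$ and hence $gg\darrow$. Either way we contradict the behaviour of $g$, so no total decider $c$ can exist.

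The main obstacle I anticipate is the bookkeeping around strictness and totality rather than any conceptual difficulty. I must ensure that every intermediate subterm in the definition of $g$ is defined whenever it needs to be: the decider $c$ is total by assumption, $d$ is total, and $\ifthenelse$ selects a branch, so the only possible source of divergence is the intended one, namely the $ha$ branch. Verifying that $g$ is genuinely an element of $\A$ (i.e.\ that $\lambda^*$-abstraction applies cleanly despite the partial subterm $ha$) is the delicate point, but it is handled by combinatory completeness exactly as in Proposition~\ref{decimpce}, where the totally undefined $h$ was incorporated into an element without difficulty.
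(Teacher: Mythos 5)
Your proposal is correct and follows essentially the same route as the paper: the identical term $(z\U^2_1)(z\U^2_2)$ witnesses that $H$ is c.e., and the undecidability argument is the paper's diagonalization with a totally undefined $h$ fed into $\ifthenelse$, differing only cosmetically (you swap the two branches and pass through the decoder $d$, where the paper keeps $\true,\false$ and applies ${\sf not}$). The ``delicate point'' you flag about the undefined subterm $ha$ inside the case distinction is present in exactly the same form in the paper's own proofs of Propositions~\ref{decimpce} and~\ref{HP}, so your treatment matches the paper's level of rigor.
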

\begin{proof}
To see that $H$ is c.e.\ in $\A$, define $e\in\A$ by 
$$
e x = (x\U^2_1)(x\U^2_2), 
$$
where $\U^2_i$ refers to the projection functions defined in 
section~\ref{sec:pca}. We then have in particular that 
$$
e \la a,b\ra \darrow \;\Longleftrightarrow\; ab\darrow 
\;\Longleftrightarrow\; \la a,b\ra \in H 
$$
which shows that $H$ is indeed c.e. in $\A$.

The proof of the undecidability is the same as for the classical case.
Namely suppose that $H$ were decidable. 
This would mean the existence of $f\in\A$ such that
$$
f\la a,b\ra = 
\begin{cases}
\true  &\text{if $ab\darrow$,}\\
\false &\text{if $ab\uarrow$.}
\end{cases}
$$
Define $g\in \A$ such that $ga\darrow$ if and only if $f\la a,a\ra = \false$. 
Such $g$ can be defined using the $\ifthenelse$ operator 
(cf.\ section~\ref{sec:pca}) as follows. 
Let $ha\uarrow$ for every $a$. (Such $h$ exists in any nontotal pca, 
cf.\ the proof of Proposition~\ref{decimpce}. Now define
$$
ga = \ifthenelse ({\sf not}(f\la a,a\ra))0(ha)
$$
This yields $0$ if $f\la a,a\ra = \false$, hence if $aa\uarrow$, 
and $ha$, which is undefined, otherwise.
Hence $ga\darrow$ if and only if $aa\uarrow$. 
Taking $a=g$ we obtain a contradiction.
\end{proof}

Note that by Proposition~\ref{HP}, as soon as a pca has 
{\em one\/} undefined application $ab\uarrow$, its 
halting problem is undecidable. 

Defining the analog of m-reductions is also straightforward:

\begin{definition}
For sets $A,B\subseteq \A$, we say that $A$ {\em m-reduces\/} to $B$, 
denoted $A\leq_m B$, if there exists a total element $f\in\A$
such that 
$$
a\in A \Longleftrightarrow fa \in B
$$
for every $a\in \A$. 
We write $A\equiv_m B$ if both $A\leq_m B$ and $B\leq_m A$, in 
which case we say that $A$ and $B$ have the same {\em m-degree}.
\end{definition}

Many basic properties of m-reductions in $\omega$ carry over to the
general case. 
For example, we can define the {\em diagonal halting problem\/} 
$$
K = \{a \mid aa\darrow\},
$$ 
and show that $K\equiv_m H$. 

\begin{proposition}
$H$ is m-complete for the c.e.\ sets in $\A$, i.e.\ 
$A\leq_m H$ for every such set $A$.
\end{proposition}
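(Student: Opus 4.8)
The plan is to exploit the universality built into the definition of $H$: since $H$ records exactly which applications $ab$ are defined, and a c.e.\ set $A$ is by definition the domain of some fixed element, membership in $A$ is literally membership of a suitable pair in $H$. So the reduction should simply package the input together with the witnessing element, exactly as in the classical reduction $a\mapsto\la e,a\ra$ of $W_e$ to the halting problem in $\omega$.

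Concretely, let $A$ be c.e.\ in $\A$ and fix $e\in\A$ with $A=\dom(e)=\{a\in\A\mid ea\darrow\}$. I would take the reducing element to be
$$
f = \lambda^* x.\la e,x\ra,
$$
using the sequence coding $\la\cdot,\cdot\ra$ from section~\ref{sec:pca}. The first thing to check is that $f$ is total, as required by the definition of m-reduction. This follows immediately from the defining property of $\lambda^*$-terms: for every $a\in\A$ we have $fa\darrow$, and moreover $fa=\la e,a\ra$. (The value $\la e,a\ra=\lambda^* z.zea$ is itself a $\lambda^*$-term and hence always defined, so no partiality can sneak in.)

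It then remains to verify the reduction property. Unwinding the definition of $H$, we have $\la e,a\ra\in H$ if and only if $ea\darrow$, which by the choice of $e$ holds if and only if $a\in A$. Combining this with $fa=\la e,a\ra$ yields
$$
a\in A \;\Longleftrightarrow\; fa\in H
$$
for every $a\in\A$, which is exactly what is needed, and shows $A\leq_m H$.

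I do not expect a genuine obstacle here; the content is entirely the s-m-n-style packaging already available from combinatory completeness. The only points requiring care are that $f$ is genuinely total rather than merely partial, and that the pairing $\la e,a\ra$ is formed with the witness $e$ and the input $a$ in the correct order, so that applying the first component to the second reproduces the computation $ea$. Both follow directly from the properties of $\lambda^*$-abstraction and the sequence coding established in section~\ref{sec:pca}.
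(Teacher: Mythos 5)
Your proof is correct and follows essentially the same route as the paper: the reduction is the map $a\mapsto\la e,a\ra$, with totality of the reducing element guaranteed because the pairing is implemented by $\lambda^*$-terms, and the equivalence $a\in A\Leftrightarrow ea\darrow\Leftrightarrow\la e,a\ra\in H$ is exactly the paper's verification. Your additional care in writing $f=\lambda^* x.\la e,x\ra$ explicitly and checking that definedness of $\la e,a\ra$ propagates to $fa$ is a fair expansion of the paper's one-line totality remark, not a different argument.
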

\begin{proof}
Suppose that $A$ is c.e.\ in $\A$, say $A = \dom(e)$ for $e\in \A$.
Then 
$$
e\in A \Leftrightarrow ea\darrow \Leftrightarrow \la e,a\ra\in H, 
$$
and hence $fa = \la e,a\ra$ is an m-reduction from $A$ to $H$. 
Note that $f$ is total, since $\la \cdot\, ,\cdot\ra$ is implemented 
by $\lambda^*$-terms in any pca. 
\end{proof}

We also have an analog of Turing reductions in any pca $\A$. 
This is somewhat harder to define, and was carried out in 
van Oosten \cite{vanOosten2006}, see also \cite{vanOostenVoorneveld}.
This gives for any pca $\A$ and any partial function $f:\A\rightarrow \A$
a new pca $\A[f]$ in which $f$ is represented, in such a way that
$\A[f]$ contains $\A$ in a natural way. Application in $\A[f]$ models 
computation in $\A$ with $f$ as an oracle, and thus 
provides an analog of relativization for pca's. 
For $\A=\K_1$ we have that 
$A \in \K_1[B]$ is equivalent to Turing reducibility $A\leq_T B$.

More specifically, $\A[f]$ has the same underlying set as $\A$, with only a different 
application operator $\cdot_f$, defined as follows. 
$a \cdot_f b \darrow = c$ if there exist $e_0,\ldots, e_{n-1} \in \A$ 
(the queries to the oracle $f$) such that for every $i<n$:
\begin{itemize}

\item $a \cdot \la b,f(e_0),\ldots,f(e_{i-1}\ra = \la \false,e_i\ra$,

\item $a  \cdot \la b,f(e_0),\ldots,f(e_{n-1}\ra = \la \true,c\ra$.

\end{itemize}
Here $\cdot$ denotes application in $\A$.
We will use this construction  in 
sections~\ref{sec:tot} and \ref{sec:Arslanov}.

\section{Extensionality and enumerations without repetitions}
\label{sec:Kreisel}
 
\begin{definition}
A pca $\A$ is called {\em extensional\/} if
\begin{equation*} \label{ext}
\fa a\in\A (fa \simeq ga) \Longrightarrow f=g
\end{equation*}
for all $f,g\in \A$.
\end{definition}

In \cite{Friedberg} Friedberg proved the classic result that the 
class of partial computable functions is c.e.\ without repetitions.
In the second edition of Odifreddi~\cite[p224]{Odifreddi} it is stated that
the existence of an extensional partial combinatory algebra follows 
from Friedberg's result, 
with a reference to Kreisel \cite{Kreisel1971}.
(This is from the second edition of \cite{Odifreddi}; 
in the first edition this statement is missing.) 
Indeed, Kreisel (p186 ibid.) suggested that an enumeration without 
repetitions could be used to obtain an extensional model.
(Note however that Kreisel's concern were models of BRFT mentioned above, 
not pca's.) 
However, Kreisel explicitly says that he did not verify this result.\footnote{%
Kreisel discusses the relation between BRFT and set theory, using 
generalized recursion theory. In this context he discusses extensionality.
Kreisel writes: ``...there are two ways of treating {\em extensionality\/}. 
[The first is proof-theoretic] Another is to appeal to an 
{\em enumeration without repetition\/}; but I have not stopped to verify
the obvious essential point whether the axioms of BRFT are in fact satisfied
for such an enumeration without repetition...''}
In any case, it seems that the mere statement of Friedberg's theorem 
is not sufficient to obtain an extensional~pca, so that at least an 
adaptation of the proof of Friedberg's result is required.

Suppose that $\psi_e$, $e\in\omega$, is an enumeration of all unary 
p.c.\ functions. 
On the face of it, it seems plausible that one could make this 
into an extensional pca, since after all every function in the 
enumeration has a unique code. 
Of course the intended application operator here is 
\begin{equation} \label{application}
n\cdot m = \psi_n(m).
\end{equation}
To prove that $\omega$ with this application operator is a pca, 
one has to show that there exist combinators $k$ and $s$ as in 
Theorem~\ref{Feferman}. 
Now the statement of Friedberg's theorem itself is not sufficient 
to prove this. Namely, for every $a$ there is a code $ka$ 
of the constant $a$ function in the enumeration, but we cannot find 
such codes uniformly in~$a$. Hence we cannot prove that we have a 
combinator $k$ satisfying $kab=a$ for every $a$ and~$b$, 
which is the first requirement of Theorem~\ref{Feferman}. 
This obstacle, however, can be overcome by an adaptation of 
the proof of Friedberg's result, see Theorem~\ref{impossible}~(i).
However, for the combinator $s$ no such adaptation is possible.

\begin{lemma} \label{lemma}
There exists a computable enumeration without repetitions $\psi_x$ 
of the unary p.c.\ functions such that for all $x,y\in\omega$, 
\begin{equation}
\psi_{2x+1}(y) = x. \label{k} 
\end{equation}
\end{lemma}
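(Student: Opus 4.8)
The plan is to adapt the construction from Friedberg's theorem \cite{Friedberg}, reserving the odd indices for the constant functions. For each $x$ write $c_x$ for the \emph{total} function $y\mapsto x$; these are pairwise distinct and computable uniformly in $x$ and $y$. I would put $\psi_{2x+1}=c_x$ outright, so that \eqref{k} holds by construction and the odd indices are automatically free of repetitions. What remains is to fill the even indices, by a Friedberg-style process, with exactly the \emph{non-constant} p.c.\ functions --- those that are either non-total or assume at least two values --- each appearing once. Since no non-constant function equals any $c_x$, this also rules out collisions between an even and an odd index, \emph{provided} the construction never lets an even slot converge to a total constant.

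The requirements are the usual ones: for each $e$, $R_e$ demands that $\vph_e$ occur in the list $(\psi_x)_x$, and for $i\neq j$, $N_{i,j}$ demands $\psi_i\neq\psi_j$. When $\vph_e$ is constant, $R_e$ is already met at an odd index, so $R_e$ need only act for non-constant $\vph_e$, where it claims an even slot $2n$ and builds $\psi_{2n}$ by copying its target while securing a point of disagreement with each of the finitely many higher-priority even slots. The constants enter as additional competitors: if the target of an even slot ever takes two distinct values, the slot is separated from every $c_x$ at once and is safely non-constant. The one point where the constants cannot be dismissed by a witness is a non-total target that takes a single value on all of its domain; here there is no argument exhibiting a disagreement with the matching $c_x$, and the slot must nevertheless be kept and recognised as carrying a function distinct from that constant.

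The main obstacle is exactly the content of Friedberg's theorem: running these requirements as a finite-injury (movable-marker) construction and proving it converges, so that (i) every non-constant p.c.\ function is, from some stage on, the stable target of a unique even slot that copies it in full, and (ii) distinct surviving slots carry distinct functions. The delicate situation throughout is a target that agrees on its current domain with a higher-priority function but may later diverge; Friedberg's waiting-and-diagonalization device resolves this without deciding function equality, and it is this same device --- now also distinguishing the non-total single-valued functions from the hard-wired constants, which can never themselves yield --- that the adaptation must accommodate. Once this is verified, together with the two bookkeeping facts that admitting the constants as competitors costs no non-constant function its permanent slot and that no even slot stabilises on a total constant, $(\psi_x)_x$ is a computable repetition-free enumeration of all unary p.c.\ functions satisfying \eqref{k}.
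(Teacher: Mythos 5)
Your overall plan is the same as the paper's: hard-wire $\psi_{2x+1}(y)=x$ on the odd indices, and enumerate the remaining p.c.\ functions on the even indices by a Friedberg-style follower construction in which the constant functions act as extra, immovable competitors. But the proposal stops exactly where the lemma has content. You isolate the dangerous case yourself --- a non-total target assuming a single value, for which no finite computation can ever witness a disagreement with the matching constant --- and then you only assert that Friedberg's device, suitably adapted, ``must accommodate'' it, without saying what the adapted release rule is. That rule is the whole adaptation, and it cannot be taken off the shelf: if a slot is released whenever its target agrees with some constant wherever the target is currently defined (the reading your phrase ``agrees on its current domain'' suggests), then a single-valued non-total target agrees with its matching constant at every stage, every follower it ever receives is released, and that function never enters the enumeration at all. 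So as written the argument fails precisely at the case you flagged, and the two ``bookkeeping facts'' you defer are not consequences of Friedberg's theorem used as a black box --- the discussion just before the lemma makes exactly this point, that the statement of Friedberg's theorem does not suffice and its proof must be reworked.

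The paper's construction resolves this with a concrete test: a follower $x$ of $\vph_e$ is released on account of the constants only when $\vph_{e,s}$ is defined and takes a single value on the \emph{entire} initial segment $\{0,\dots,x-1\}$. Both bookkeeping facts then fall out. If $\vph_e$ is a total constant, then any fixed follower $x$ eventually triggers this test, so every follower is eventually released and no even slot stabilizes on a total constant. If $\vph_e$ is single-valued but undefined at some $m$, then no follower $x>m$ is ever released by this test; since released followers are replaced by fresh, ever larger ones (appointed infinitely often), $\vph_e$ eventually receives a permanent follower --- the construction never needs to recognise that the hole in the domain is permanent, it only needs a follower sitting above it. Two further details your sketch omits but the paper needs: released slots must be made finite, \emph{nonconstant}, and pairwise incompatible functions (this is what guarantees that releases caused by duplication with abandoned slots happen only finitely often per target, and keeps abandoned slots from colliding with the odd indices), and the empty function must be hard-wired separately (the paper sets $\psi_0=\emptyset$), since no nonempty finite function can be made incompatible with it.
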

\begin{proof}
For the application of the lemma below, we need to be able to effectively 
retrieve the combinator $k$ (as in Theorem~\ref{Feferman}) 
from the enumeration. Note that for every $a$, $ka$ is the function 
that is constant~$a$. We code the functions $ka$ on the odd numbers by 
defining $\psi_{2a+1}$ as in~\eqref{k}.
We use the even numbers for the construction of the enumeration of 
all other p.c.\ functions, in the manner of Friedberg~\cite{Friedberg}
(see also Odifreddi \cite[II.5.22]{Odifreddi}).\footnote{
There is nothing very special about the class of constant functions 
being fixed in this lemma. Similar modifications of Friedberg's result 
have been made by Pour-El and Howard and others, 
cf.\ \cite[p232]{Odifreddi} for references and further discussion.}

We start the construction by fixing $\psi_{2x+1}$ as in \eqref{k} for 
every~$x$.
We construct $\psi_{2x}$ in such a way that every unary p.c.\ function 
occurs exactly once. 
(We assume that $\vph_e$ is an enumeration of all unary p.c.\ functions.)
We do this by letting $\psi_{2x}$ follow some $\vph_e$ for every~$x$. 
If subsequently it looks like $e$ is not a minimal code of $\vph_e$,
or that $\vph_e$ is one of the constant functions,
we release the follower by making $\psi_{2x}$ a finite function
different from all functions occurring so far, and stop its enumeration.

We say that $x$ is a {\em follower\/} of $\vph_e$ at stage $s$
if we are trying to make $\psi_x = \vph_e$, i.e.\ $\psi_{x,s} = \vph_{e,s}$.
A follower $x$ of $\vph_e$ is {\em permanent\/} if it is a follower 
of $\vph_e$ at almost every stage. 

For a follower $x$ of $\vph_e$, to {\em release\/} $x$ at stage $s$ means 
that $x$ is no longer a follower of $\vph_e$, and that we define 
$\psi_x$ to be different from all other finite functions $\psi_{y,t}$ that 
have been defined so far, i.e.\ with $y\neq x$, $t\leq s$, and with 
either $0<y<s$ or $y$ odd, 
by making it a finite function incompatible to these. 
We will have that $\psi_0$ is the empty function, and all other 
$\psi_x$ will have nonempty domain. Since we work in $\omega^{<\omega}$ 
it will always be possible to find incompatible strings for finite 
functions with nonempty domain. To make $\psi_x$ incompatible with all 
$\psi_y$ with $y$ odd, it suffices to make it nonconstant.

The formal construction is as follows. 
At stage $s=0$, define $\psi_{2x+1}$ as in \eqref{k} for every~$x$.
Also define $\psi_0 = \emptyset$ to be the empty function. 
In the rest of the construction, we only consider $\vph_e$ with 
nonempty domain. 

At stage $s>0$ of the construction we do the following.

For every $x<s$, if $x$ is a follower of $\vph_e$, 
we release $x$ if $x$ is even and one of the following hold:
\begin{itemize}
 
\item there is $i<e$ with $\vph_{i,s} \restr x = \vph_{e,s} \restr x$. 
(In this case $e$ does not look like a minimal code.)

\item for some follower $y$ already released, $\psi_{y,s} = \psi_{x,s}$. 
(In this case $\vph_e$ might equal the finite function $\psi_y$, 
and we have to avoid the duplication.)

\item $\vph_{e,s} \restr x$ is a constant function, 
i.e.\ $\ex a \, \fa n<x \, (\vph_{e,s}(n) =a)$. 
(Since the constant functions are already covered by the $\psi_{2x+1}$.)

\end{itemize}

If $s = \la e,t\ra$, and $\vph_{e,s}\neq \emptyset$, and $\vph_e$ currently 
does not have a follower, pick the smallest even $x$ that has not yet been used 
as a follower, and appoint $x$ as a follower of $\vph_e$.
Note that this ensures that every nonempty $\vph_e$ has infinitely 
many opportunities of being appointed a follower.

Finally, for every $x$ and $e$ such that $x$ is a follower of $\vph_e$ 
at stage~$s$, define $\psi_{x,s} = \vph_{e,s}$.
This ends the construction.
We verify that the enumeration $\psi_x$ is as desired. 

Claim: $\fa e \, \ex x \, ( \vph_e = \psi_x )$, i.e.\ every unary p.c.\ function 
occurs in the enumeration $\psi_x$. 
To prove the claim, suppose that $e$ is a minimal index of $\vph_e$, and 
that $s_0$ is so large that 
$$
\fa s>s_0 \, \fa x>s_0 \, \fa i< e \, ( \vph_{i,s}\restr x \neq \vph_{e,s}\restr x).
$$
If $\vph_e$  is constant then it is equal to $\psi_{2x+1}$ for some $x$ 
by stage $0$ of the construction. 
Suppose that $\vph_e$ is not constant. 
If $\vph_e$ has a permanent follower $x$ then $\vph_e = \psi_x$. 
Otherwise, $\vph_e$ keeps getting appointed new followers (at stages of 
the form $s = \la e,t\ra$).
Since the first option for releasing a follower $x$ is ruled out after stage $s_0$
by assumption, and the third is ruled out because $\vph_e$ is not constant, 
the only option for releasing $x$ after this stage is the second one, 
namely that $\psi_{y,s} = \psi_{x,s}$ for some already released~$y$. But this 
can happen only once, since all $\psi_{y,s}$ for $y$ released are incompatible. 

Claim: $x\neq y \Longrightarrow \psi_x \neq \psi_y$, i.e.\ $\psi_x$ is an 
enumeration without repetitions.
Namely, the $\psi_x$ for $x$ odd are all different by \eqref{k}.
Note further that every even $x>0$ is eventually used as a follower, since we 
always pick the smallest one not used yet. 

If $x$ is a follower of $\vph_e$, and $\vph_e$ is constant, then 
$x$ will eventually be released by the third reason for release 
in the construction. 
Hence $\psi_x$ is never a constant function for any even~$x$. 
So it suffices to prove the claim for $x$ and $y$ even. 
We check the following cases. 

Suppose that $x$ and $y$ are permanent followers, say $\psi_x = \vph_e$ and 
$\psi_y = \vph_i$. Then $e\neq i$ since $\vph_e$ can have at most one 
permanent follower. W.l.o.g.\ suppose $i<e$. If $\psi_x = \psi_y$ then 
$\vph_e = \vph_i$, hence there is a stage $s$ such that 
$\vph_{i,s} \restr x = \vph_{e,s} \restr x$, causing $x$ to be released, 
contrary to assumption. Hence we must have $\psi_x \neq \psi_y$.

Suppose that both $x$ and $y$ are released at some stage. 
Since all functions $\psi_x$ and $\psi_y$ for different released 
$x$ and $y$ are incompatible, this implies that they are different. 

Finally suppose that one of $x$ and $y$ is permanent and the other is 
released, say $x$ is permanent and $y$ is not.
If $\psi_x = \psi_y$, then $\psi_x$ is a finite function. 
So at some stage $s$ we will have $\psi_{y,s} = \psi_{x,s}$, 
causing $x$ to be released, contradicting the assumption.

This proves the second claim, and the proof of the lemma.
\end{proof}

\begin{theorem} \label{impossible}
\begin{enumerate}[\rm (i)]

\item 
There exists an extensional pas on the set of all partial computable 
unary functions, containing a combinator $k$ as in Theorem~\ref{Feferman}.

\item 
There exists no such a pas with the combinator $s$. 

\end{enumerate}
\end{theorem}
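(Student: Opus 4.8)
The plan is to prove the two parts separately, since they have very different characters. For part (i), the work is essentially already done: Lemma~\ref{lemma} constructs an enumeration $\psi_x$ without repetitions in which the constant-$x$ function sits at index $2x+1$. So I would define the pas on the set of all unary p.c.\ functions (equivalently on $\omega$ via the indices $\psi_x$) with application $n\cdot m=\psi_n(m)$, and simply set $k$ to be the element coding the map $a\mapsto \psi_{2a+1}$. Concretely, because $\psi_{2a+1}(y)=a$ for all $y$ by~\eqref{k}, the element $k$ with $\psi_k(a)=2a+1$ witnesses $kab=\psi_{\psi_k(a)}(b)=\psi_{2a+1}(b)=a$, and $ka\darrow$ always, so $k$ is total. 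The point Lemma~\ref{lemma} buys us is exactly that the code $2a+1$ of the constant-$a$ function is obtained \emph{uniformly and computably} from $a$, which is what was missing from the bare statement of Friedberg's theorem. Extensionality is automatic: distinct indices code distinct functions, so $\fa a\,(fa\simeq ga)$ forces $\psi_f=\psi_g$ and hence $f=g$. This disposes of (i).

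For part (ii) the task is to show that \emph{no} repetition-free enumeration of the unary p.c.\ functions admits a combinator $s$ satisfying $sabc\simeq ac(bc)$ with $sab\darrow$ for all $a,b$. I would argue by contradiction: assume such an $s$ exists in some extensional pas $(\omega,\cdot)$ built from a repetition-free enumeration $\psi$. The strategy is to extract from $s$ an effective violation of repetition-freeness. The key observation is that in an \emph{extensional} pas, combinatory completeness together with $s$ forces the application structure to behave like honest composition/substitution, and the element $sab$ must be the unique code of the function $c\mapsto \psi_a(c)(\psi_b(c))$ (where the inner application is again $\cdot$). Because the enumeration is repetition-free, this code is \emph{determined} by the function it computes, so $sab$ cannot depend on $a,b$ beyond the functions they name --- yet $s$ itself is a single fixed p.c.\ index computing $(a,b)\mapsto sab$. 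I would then engineer a diagonal family where two distinct indices $a,a'$ compute the same function (which must happen since there are $\aleph_0$ functions but the substitution values range over a genuinely richer structure) and derive that $s$ would have to produce the same output on inputs that extensionality forces apart, contradicting either totality of $sab$ or single-valuedness.

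More precisely, the main obstacle --- and the technical heart of (ii) --- is showing that the uniformity demanded by $s$ is incompatible with the non-uniformity of minimal indices that Friedberg's construction exploits. In part (i) we could hard-wire the constant functions onto the odd numbers precisely because ``apply $k$ to $a$'' only needs to know $a$, not any index computing a complicated function of $a$. For $s$, the combinator must, given codes $a$ and $b$, compute a code for $c\mapsto ac(bc)$; but in a repetition-free enumeration the map sending a p.c.\ index to the \emph{unique} index of the function it computes is not computable (this is the standard non-existence of a computable ``minimal-index'' or normalizing function, a consequence of Rice-type / recursion-theoretic arguments). I would make this rigorous by using the recursion theorem to build a p.c.\ function whose index $e$ satisfies $\psi_e=\psi_{e'}$ for a competing index $e'$ that $s$ is forced to treat differently, so that $se\,b$ and $se'\,b$ would have to be simultaneously equal (by extensionality, since $\psi_e=\psi_{e'}$ gives $ec\simeq e'c$ hence $ec(bc)\simeq e'c(bc)$) and unequal (because $s$ is a fixed computable function returning distinct values on the distinct inputs $e,e'$ it cannot identify).

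The contradiction is that extensionality plus $s$ would yield a computable function selecting a canonical index for each p.c.\ function, i.e.\ a computable choice of representatives across the equivalence ``computes the same function'', which is well known to be impossible. I expect the delicate point to be formalizing ``$s$ is a fixed computable operation on indices'' correctly: one must note that $sab$ is itself an element of $\omega$ obtained by applying the p.c.\ function $\psi_{\psi_s(a)}$ to $b$, so $s$ induces a \emph{total} computable binary function $(a,b)\mapsto sab$ (totality being guaranteed by the axiom $sab\darrow$), and then combine this with extensionality to force the impossible index-normalization. Once that framing is set up, the recursion-theoretic diagonalization produces the required contradiction, completing the proof that no such pas with combinator $s$ exists.
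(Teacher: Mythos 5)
Your part (i) is correct and is essentially the paper's own proof: take the enumeration of Lemma~\ref{lemma}, define application by \eqref{application}, let $k$ be a $\psi$-code of the computable map $a\mapsto 2a+1$, and read off $kab=a$ from \eqref{k}, with extensionality immediate from 1-1-ness.

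Part (ii) has a genuine gap. The construction at the heart of your sketch --- ``engineer'' two distinct indices $e\neq e'$ with $\psi_e=\psi_{e'}$ --- is exactly what repetition-freeness forbids; such a pair cannot be an \emph{ingredient} of the argument, it can only be the final absurdity derived from the assumed combinators. Your cardinality justification for its existence is vacuous, and the recursion theorem you invoke does not produce it: Kleene's recursion theorem lives in the standard numbering $\vph$, and transporting its fixed point into the numbering $\psi$ requires precisely the computable $\vph$-index-to-$\psi$-index translation whose nonexistence you cite in the same paragraph. Moreover, the contradiction you aim for is located in the wrong place: you claim $se\,b$ and $se'\,b$ must differ ``because $s$ is a fixed computable function returning distinct values on the distinct inputs $e,e'$'', but nothing forces $(a,b)\mapsto sab$ to be injective; on the contrary, if $\psi_e=\psi_{e'}$ then $ec(bc)\simeq e'c(bc)$ for all $b,c$, so 1-1-ness forces $se=se'$, and no contradiction arises there at all. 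Finally, your closing claim that $s$ yields ``a computable choice of representatives across the equivalence `computes the same function''' is never established: that classical impossibility concerns a computable map defined on the indices of an acceptable numbering, and you give no bridge from the internal combinator $s$ to such an external map.

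The paper's proof avoids all translation between numberings and works internally, using 1-1-ness \emph{positively} rather than negatively. Assuming $k$ and $s$ (hence combinatory completeness, the pca analogue of the S-m-n theorem), apply it to the term $t(x,y)=0\cdot(x\cdot x)$, where $0$ is the $\psi$-code of the constant-zero function. This gives a total computable map $a\mapsto fa$ such that $\psi_{fa}$ is the constant-zero function if $\psi_a(a)\darrow$ and the empty function otherwise. Because $\psi$ is 1-1, the set $\{b : \psi_b \text{ is constant zero}\}$ is a \emph{singleton}, hence decidable; so $f$ m-reduces the diagonal set $\{a : \psi_a(a)\darrow\}$ --- undecidable by the usual diagonal argument, which needs only that $\psi$ enumerates all p.c.\ functions, not that it is acceptable --- to a decidable set, a contradiction. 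If you want to salvage your fixed-point idea instead, the correct instrument is Feferman's recursion theorem \emph{inside the assumed pca} (available once both combinators are present): for any total $f\in\A$ it yields $e$ with $ex\simeq(fe)x$ for all $x$; taking $f$ to be a $\psi$-code of the successor function on indices (which is an element here, since the carrier is all p.c.\ functions with the intended application) gives $\psi_e=\psi_{e+1}$, contradicting 1-1-ness directly. Either way, the step your proposal is missing is a legitimate mechanism that forces the impossible coincidence of indices, rather than postulating it.
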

\begin{proof}
(i) 
Using the enumeration $\psi_x$ from Lemma~\ref{lemma}, 
define application as in \eqref{application}. 
This pas is clearly extensional, as the enumeration $\psi_x$ is 1-1.
We show that we have the combinator~$k$.  
Let $k$ be a code such that $\psi_k(a) = 2a+1$ for every~$a$. 
The code $k$ exists because this is a computable function, 
so it occurs in the enumeration. 
Then by \eqref{k} we have 
$$
\psi_{\psi_k(a)}(b) = a, 
$$
hence $k$ is a code of a total function with the property 
$kab = a$ for every~$a$.

For the proof of part (ii),
suppose that $\psi_e$, $e\in\omega$, is a computable 
enumeration without repetitions containing all unary p.c.\ functions, 
and suppose that $\omega$ with the application operator 
\eqref{application} is a pca. 
We make the following observations. 

I. $\{a \in \omega : \psi_a(a)\darrow\}$ is undecidable. 
This is the same as the usual argument for the undecidability of 
the halting problem: 
Suppose that $c$ is a code such that 
$\vph_c(a) \darrow \Leftrightarrow \psi_a(a)\uarrow$ for every~$a$. 
Since the enumeration $\psi_e$ contains a code of every p.c.\ function, 
there exists $e$ such that $\psi_e = \vph_c$. Taking $a=e$ we obtain
a contradiction: 
$\psi_e(e)\darrow \Leftrightarrow 
\vph_c(e)\darrow \Leftrightarrow \psi_e(e)\uarrow$.

II. $\{b \in \omega : \psi_b \text{ is constant zero}\}$ is decidable.\footnote{
Note that for the standard numbering $\vph_e$ of the p.c.\ functions,
the set from II is $\Pi^0_2$-complete.}
Suppose $c$ is a code such that $\psi_c$ is the constant zero function. 
Since codes in the enumeration $\psi_e$ are unique, 
$\psi_b$ is constant zero if and only if $b=c$.

Since we have assumed that $\omega$ with application \eqref{application} 
is a pca, we have combinatory completeness (see Definition~\ref{def:pca}), 
which is an analogue of the S-m-n-theorem. 
Using this we can reduce I to II, and thus we obtain a contradiction. 
Namely, consider the term 
$$
t(x,y) = 0\cdot (x\cdot x). 
$$
Here $0\cdot x$ should be read as the constant zero function applied to $x$ 
(which happens to be the same notation as multiplying with $0$).
By combinatory completeness, there exists $f\in\omega$ such 
that for every $a$ and $c$ in $\A$, 
$fa\darrow$ and $fac \simeq 0(aa) = 0\cdot \psi_a(a)$.
So we have that $\psi_a(a)\darrow$ if and only if 
$\psi_{fa}$ is the constant zero function.
Because $fa = \psi_f(a)$ is a total computable function, this 
constitutes an $m$-reduction from I to II. 
Since the set from II is decidable, it follows that the one 
from I is also decidable, contradicting what we proved above.
\end{proof}

\begin{corollary} \label{cor:imp}
There does not exist an extensional pca on the set of all 
p.c.\ functions (with application the intended one)
\end{corollary}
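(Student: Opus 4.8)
The plan is to derive Corollary~\ref{cor:imp} directly from part~(ii) of Theorem~\ref{impossible}. The key observation is that an extensional pca on the set of all p.c.\ functions, with the intended application operator~\eqref{application}, must in particular be a pca, and hence by Feferman's Theorem~\ref{Feferman} must contain a combinator~$s$ satisfying $sab\darrow$ and $sabc \simeq ac(bc)$ for all $a,b,c$. But this is exactly the kind of pas ruled out by Theorem~\ref{impossible}~(ii), so the corollary should follow essentially immediately.

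More precisely, the argument I would give is a short contradiction. Suppose for contradiction that such an extensional pca~$\A$ exists on the set of all unary p.c.\ functions with application $n\cdot m = \psi_n(m)$ for some enumeration $\psi_e$. Extensionality forces the enumeration $\psi_e$ to be without repetitions: if $\psi_n = \psi_m$ as functions, then $n\cdot a \simeq m \cdot a$ for all $a$, so $n=m$ by extensionality. Thus $\psi_e$ is a computable enumeration without repetitions containing all unary p.c.\ functions. Being a pca, $\A$ contains the combinator~$s$ by Theorem~\ref{Feferman}. This is precisely a pas of the form forbidden by Theorem~\ref{impossible}~(ii), giving the desired contradiction.

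I would then note explicitly why the enumeration is genuinely \emph{onto} the p.c.\ functions and computable: the underlying set is stipulated to be all p.c.\ functions, and the intended application is $n\cdot m = \psi_n(m)$, so the index map $e \mapsto \psi_e$ is a standard (hence computable) numbering of all unary p.c.\ functions. These are exactly the hypotheses invoked in the second part of the proof of Theorem~\ref{impossible}.

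I do not expect any genuine obstacle here, since the corollary is a direct specialization: the real content lives in Theorem~\ref{impossible}~(ii). The only point requiring a line of care is the derivation of the no-repetition property from extensionality together with the fact that the carrier is \emph{all} p.c.\ functions; once that is spelled out, the statement follows by quoting part~(ii). One might add a remark that part~(i) shows the combinator~$k$ alone is not the obstruction, so the impossibility is genuinely due to~$s$, but that is commentary rather than part of the proof.
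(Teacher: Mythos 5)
Your proposal is correct and takes essentially the same route as the paper: the paper's proof of Corollary~\ref{cor:imp} is exactly the observation that such a pca would have to contain the combinators $k$ and $s$ by Theorem~\ref{Feferman}, contradicting Theorem~\ref{impossible}~(ii), and your extra step deriving the 1-1 property of the enumeration from extensionality merely makes explicit what ``such a pas'' means in the statement of part~(ii). One caution: your parenthetical claim that the index map is a ``standard (hence computable)'' numbering is false---a 1-1 numbering can never be standard/acceptable (the paper itself notes this in Section~8, since acceptable numberings satisfy padding), and computability of the enumeration does not follow from the hypotheses---but this is harmless, because the statement of Theorem~\ref{impossible}~(ii) suffices as cited, and its proof never actually uses uniform computability of the enumeration (only that each $\psi_f$ is a p.c.\ function, that the enumeration is onto and 1-1, and combinatory completeness).
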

\begin{proof}
By Theorem~\ref{Feferman}, such a pca would have to contain combinators 
$s$ and $k$, which is impossible by Theorem~\ref{impossible}~(ii).
\end{proof}

As we mentioned above, Kreisel's suggestion was about models of BRFT, 
not pca's. However, since every BRFT gives rise to a pca 
(cf.\ the introduction), 
Corollary~\ref{cor:imp} also precludes the use of Friedberg's result 
to construct extensional models of BRFT.

\section{Inseparability}\label{sec:insep}

In this section we show that every pca has computably 
inseparable subsets $A$ and $B$. This is completely analogous to 
the situation in classical computability theory, even though the 
sets $A$ and $B$ may not always be representable in the pca. 
We use this in the following sections when we discuss 
elements without total extensions. 

\begin{definition} \label{def:separable}
Let $\A$ be a pca. We call a pair of disjoint subsets $A,B\subseteq\A$
{\em computably separable\/} if there exists a decidable subset 
$C\subseteq \A$ such that $A\subseteq C \subseteq \cmp{B}$, 
and {\em computably inseparable\/} otherwise.
\end{definition}

Define
\begin{align*}
A = \{ a \in \A \mid aa\darrow = 0 \}, \\
B = \{ a \in \A \mid aa\darrow = 1 \}.
\end{align*}

\begin{proposition} \label{prop:inseparable}
The sets $A$ and $B$ are computably inseparable in $\A$. 
\end{proposition}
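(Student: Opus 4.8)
The plan is to run the standard diagonal argument, exactly as in the classical proof that the two halves of the diagonal of the universal function are computably inseparable, with the \emph{separating decider itself} playing the role of the diagonal witness.

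First I would assume, for contradiction, that $A$ and $B$ are computably separable, and fix a decidable set $C$ with $A \subseteq C \subseteq \cmp{B}$. Let $c\in\A$ be the total element deciding $C$, so that $ca = 1$ when $a\in C$ and $ca = 0$ when $a\notin C$ (using the numerals $0,1$ as truth values, as justified by Proposition~\ref{prop:TF01}). The key observation is that $c$ is total, so $cc\darrow$, and by the definition of a decider its value is either $0$ or $1$.

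Next I would feed $c$ to itself and split into the two cases. If $cc = 0$, then on the one hand $c\in A$ by the definition of $A$, whence $c\in C$ because $A\subseteq C$; on the other hand $cc = 0$ means $c\notin C$ by the choice of $c$, a contradiction. Symmetrically, if $cc = 1$, then $c\in B$ by the definition of $B$, so $c\notin C$ since $C\subseteq\cmp{B}$; but $cc = 1$ means $c\in C$, again a contradiction. Since $cc$ must take one of these two values, no such $C$ can exist, so $A$ and $B$ are computably inseparable.

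I do not expect a genuine obstacle here: the argument is a direct transcription of the classical one, and the point collapses to noticing that the hypothetical decider is already its own diagonal witness. The only step requiring a little care is the bookkeeping about truth values, namely ensuring that the output of the decider $c$ lands among the numerals $0,1$ appearing in the definitions of $A$ and $B$, which is exactly what Proposition~\ref{prop:TF01} lets us arrange. I would also note in passing that we work in a nontrivial pca, so that $0\neq 1$ and the sets $A$ and $B$ are genuinely disjoint; and that, in contrast to the classical setting, the argument never requires $A$ or $B$ themselves to be representable, but only the hypothetical separator $C$ to be decidable.
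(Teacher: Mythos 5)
Your proposal is correct and is essentially the paper's own proof: both apply the hypothetical decider $c$ of the separating set $C$ to itself and derive a contradiction from the definitions of $A$, $B$ and the inclusions $A\subseteq C\subseteq\cmp{B}$. The only cosmetic difference is that you case-split on the value of $cc$ while the paper splits on whether $c\in C$, which is the same diagonal argument read in the other direction.
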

\begin{proof}
Suppose that $A\subseteq C \subseteq \cmp{B}$ and that
$C$ is decidable by $c\in\A$. Then 
\begin{align*}
c\in C \Longrightarrow cc\darrow = 1 \Longrightarrow c\in B \Longrightarrow c\notin C,\\
c\notin C \Longrightarrow cc\darrow = 0 \Longrightarrow c\in A \Longrightarrow c\in C,
\end{align*}
and we have a contradiction. 
\end{proof}

Note that for $\K_1$, the set $A$ and $B$ are the standard example
of a pair of computably inseparable c.e.\ sets. 
We note that the sets $A$ and $B$ need not always be c.e.\ in $\A$. 
A sufficient condition for $A$ and $B$ to be c.e.\ is that every 
singleton $\{a\}$ is c.e.\ in $\A$. 
To see that this implies that $A$ is c.e., suppose that 
$e\in\A$ is such that $ea\darrow \Leftrightarrow aa\darrow$. 
Since $\{0\}$ is c.e., there exists $d\in\A$ such that 
$da\darrow \Leftrightarrow a=0$. Then we have 
$$
d(ea)\darrow \Leftrightarrow ea\darrow = 0 \Leftrightarrow aa\darrow =0,
$$
hence $A$ is c.e.\ in $\A$.
The condition that every singleton is c.e.\ holds in $\K_1$ and $\K_2$.

Scott (cf.\ \cite[Theorem 6.6.2]{Barendregt}) proved that for the set of 
terms $\Lambda$ in the lambda calculus, any pair of disjoint subsets that 
are closed under equality is computably inseparable. 
Note that this refers to ordinary computable inseparability in $\omega$, 
using a suitable coding of lambda-terms \cite[Definition 6.5.6]{Barendregt}.
Note that Definition~\ref{def:separable} is more general, as it also applies 
to uncountable domains.

\section{Elements without total extensions} \label{sec:tot}

\begin{definition}
For elements $b$ and $f$ of a pca $\A$, we say that $f$ is a 
{\em total extension\/} of $b$ if $f$ is total and for every $a\in\A$, 
$$
ba\darrow \; \Longrightarrow \; fa = ba.
$$
\end{definition}

It is well-known that there exist p.c.\ functions without total computable 
extensions. This follows e.g.\ from the existence of computably inseparable 
c.e.\ sets. The existence of inseparable sets from 
Proposition~\ref{prop:inseparable} does not immediately yield the same 
result for pca's, as these sets do not have to be c.e.\ in $\A$. 
To obtain elements without total extensions, an extra property is needed.

\begin{definition} \label{def:sep}
We say that $0$,$1$ are {\em separable\/} in $\A$ if there exists a 
total $0$-$1$-valued $c\in\A$ such that for every $a\in\A$, 
\begin{align*}
ca = 0 &\Longrightarrow a\neq 1 \\
ca = 1 &\Longrightarrow a\neq 0.
\end{align*} 
\end{definition}

Note that separability of $0$,$1$ in $\A$ implies that $0\neq 1$, and 
that it is equivalent to the statement that the subsets $\{0\}$, $\{1\}$ 
are computably separable. 
This provides a {\em constructive\/} way to verify for 
every element $a\in\A$ the formula $a\neq 0 \vee a\neq 1$.

In Definition~\ref{def:sep} we have used $0$ and $1$, i.e.\ the 
numerals $\bar 0$ and $\bar 1$ (cf.\ the discussion 
in section~\ref{sec:comp}), but the notion of separability 
would apply to any other pair of elements from~$\A$.

Separability of $0$ and $1$ is satisfied in $\K_1$ and $\K_2$, 
but not in every pca. For example, it does not hold in the 
$\lambda$ calculus. By Corollary~\ref{cor} below, $0$ and $1$ 
are inseparable in any total pca, and by 
Theorem~\ref{thm:nontotalexample} there also exist nontotal 
examples where this is the case.

\begin{theorem} \label{thm:total}
Suppose that $\A$ is a pca such that $0$,$1$ are separable in $\A$.
Then there exists $b\in\A$ without a total extension $f\in \A$. 
\end{theorem}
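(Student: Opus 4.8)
The plan is to mimic the classical diagonal construction that produces a partial computable function without a total computable extension, but carried out internally in the pca using the separability hypothesis as the decision procedure. In classical recursion theory one builds a function that defeats every candidate total extension by diagonalization against the computably inseparable c.e. sets $A=\{a\mid aa\darrow=0\}$ and $B=\{a\mid aa\darrow=1\}$ from Proposition~\ref{prop:inseparable}. The obstacle noted in the preamble to the theorem is that these sets need not be c.e. in $\A$, so I cannot simply invoke their c.e.-ness; instead the separability of $0,1$ will supply exactly the internal decision power I need.

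First I would fix a total $0$-$1$-valued $c\in\A$ witnessing separability of $0,1$ as in Definition~\ref{def:sep}, and also fix (via the proof of Proposition~\ref{decimpce}) an element $h\in\A$ with $ha\uarrow$ for all $a$; note the theorem's hypothesis forces $0\neq 1$, hence $\A$ is nontrivial and nontotal, so such $h$ exists. The idea is to define $b\in\A$ by a $\lambda^*$-term so that on input $a$, the element $b$ first computes $c(aa)$ — the separator applied to the diagonal value $aa$ — and then outputs the \emph{opposite} Boolean, using $\ifthenelse$ together with the numerals $\bar 0,\bar 1$, while diverging in the remaining case. Concretely I would set something like $ba=\ifthenelse\big(\zero(c(aa))\big)\,\bar 1\,\bar 0$ or, to guarantee the right divergence behaviour, interpose $h$ so that $ba$ is undefined precisely when $aa\uarrow$. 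The key point is that $b$ is built from $c$, $h$, the numerals, $\ifthenelse$ and application, all of which are available in any pca, so $b\in\A$; and by construction $ba\darrow$ implies $aa\darrow$, with $ba=\bar 0$ when $c(aa)=\bar 1$ and $ba=\bar 1$ when $c(aa)=\bar 0$.

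The verification step is the diagonalization itself. Suppose toward a contradiction that $f\in\A$ is a total extension of $b$. Then I would apply $f$ to a suitable fixed point or, more directly, reason about $f$ on its own code: because $f$ is total, $fa\darrow$ for every $a$, and in particular $f$ decides, for each $a$, a Boolean value extending $b$. I would use $f$ to separate the inseparable sets $A$ and $B$ of Proposition~\ref{prop:inseparable}: the separator property of $c$ guarantees that whenever $aa\darrow=0$ we have $a\in A$ and $c(aa)$ may only be the value forbidding $a=\bar 1$, while whenever $aa\darrow=1$ the symmetric statement holds, so that $b$ outputs a Boolean consistently classifying the diagonal into $A$ versus $B$. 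A total extension $f$ then yields a total $0$-$1$-valued element deciding membership in a set $C$ with $A\subseteq C\subseteq\cmp B$, contradicting Proposition~\ref{prop:inseparable}.

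The main obstacle I anticipate is bookkeeping the undefinedness correctly: I must ensure that $ba$ diverges exactly when $aa\darrow$ fails, so that a total $f$ extending $b$ is genuinely forced to guess the values on the diagonal set and thereby separates $A$ from $B$; getting the interplay between $h$ (controlled divergence), $\zero$ (converting numerals to Booleans, cf.\ Proposition~\ref{prop:TF01}), and $\ifthenelse$ right is the delicate part, since strictness of application means an ill-placed subterm could make $b$ diverge where I want it to converge or vice versa. Once the defining term for $b$ is pinned down so that $ba\darrow\Leftrightarrow aa\darrow$ and the output Booleans classify $aa$ into $A$ or $B$, the contradiction with computable inseparability is immediate. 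I expect the cleanest route is to phrase the final contradiction directly in terms of the decidable set $C=\{a\mid fa=\bar 0\}$ (or its analogue) and check $A\subseteq C\subseteq\cmp B$ using the separator property of $c$, rather than re-deriving a diagonal argument from scratch.
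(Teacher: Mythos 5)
Your proposal is correct and follows essentially the same route as the paper: diagonalize via the self-application $aa$, and use the separability witness $c$ to turn a hypothetical total extension $f$ into a total $0$-$1$-valued element that decidably separates the inseparable sets $A$ and $B$ of Proposition~\ref{prop:inseparable}. The paper's version is just leaner: it takes $ba = aa$ outright and composes afterwards, setting $\hat f a = c(fa)$, so your Boolean flip, the auxiliary divergent $h$, and the worry about arranging $ba\darrow \Leftrightarrow aa\darrow$ are all unnecessary (strictness and the totality of $c$ handle this automatically), while the final contradiction is obtained exactly as you describe.
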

\begin{proof}
Define $ba = aa$. (Note that such a $b$ exists by combinatory completeness
applied to the term $t(x) = xx$.)
Suppose that $f\in\A$ is a total extension of $b$, 
and let $c\in\A$ be a total 0-1-valued separation of $0$,$1$ as in 
Definition~\ref{def:sep}.
Then $\hat f a = c(fa)$ is also 0-1-valued, and again $\hat f \in \A$ 
by combinatory completeness. Now 
\begin{align*}
aa\darrow = 0 \Longrightarrow ba\darrow = 0 \Longrightarrow fa=0 \Longrightarrow \hat f a = c(fa)=0,\\
aa\darrow = 1 \Longrightarrow ba\darrow = 1 \Longrightarrow fa=1 \Longrightarrow \hat f a = c(fa)=1,
\end{align*}
and hence $\hat f$ is a total 0-1-valued extension of~$b$. 
But this contradicts the computable inseparability of the sets 
$A$ and $B$ from Proposition~\ref{prop:inseparable}. 
\end{proof}

Note that the proof of Theorem~\ref{thm:total} still does not 
require the sets $A$ and $B$ to be c.e.\ in $\A$.

\begin{corollary} \label{cor}
In any total pca $\A$ (i.e.\ in any combinatorial algebra), 
$0$ and $1$ are inseparable. 
\end{corollary}
\begin{proof}
If $0$,$1$ are separable in $\A$ then by Theorem~\ref{thm:total} 
there exists an element without a total extension, which is 
clearly impossible if $\A$ is total. 
\end{proof}

By Corollary~\ref{cor}, if $0$,$1$ are separable in $\A$ then 
$\A$ is not total. The converse of this does not hold by the 
next theorem. 

\begin{theorem} \label{thm:nontotalexample}
There exists a nontotal pca $\A$ in which $0$,$1$ are inseparable.
\end{theorem}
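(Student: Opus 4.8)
The plan is to build the desired example by \emph{relativizing} a total pca, using the construction $\A \mapsto \A[f]$ from section~\ref{sec:relativization}. The idea is to start from a total pca in which $0,1$ are already inseparable, and then adjoin a partial oracle $f$ that creates nontotality \emph{without supplying any information that could help separate $0$ from $1$}. Concretely, I would take $\A$ to be any nontrivial total pca (a combinatory algebra), for instance the term model of the $\lambda$-calculus, in which $\bar 0\neq\bar 1$; by Corollary~\ref{cor} the numerals $0,1$ are then inseparable in $\A$. For the oracle I would take $f$ to be the \emph{nowhere defined} partial function on $\A$. Since $\A[f]$ is a pca for every partial $f$, this already yields a candidate, and it remains to verify that $\A[f]$ is nontotal and that $0,1$ are still inseparable in it.

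Nontotality comes for free from the choice of $f$. In $\A[f]$ the oracle $f$ is represented by an element $r\in\A$ which, on an input $b$, first returns the query $\la\false,b\ra$ and, once the value $f(b)$ is supplied, returns it; that is, $r\la b\ra = \la\false,b\ra$ and $r\la b,w\ra = \la\true,w\ra$, which exists by combinatory completeness in $\A$. Because $f(b)\uarrow$ for every $b$, the tuple $\la b,f(b)\ra$ is never defined, so the computation of $r\cdot_f b$ is stuck after its first query and $r\cdot_f b\uarrow$ for all $b$. Thus $\A[f]$ is nontotal. For inseparability, the key observation is that with $f$ nowhere defined no \emph{successful} $\cdot_f$-computation can make even one oracle query: any query would require forming a tuple $\la b,f(e_0),\ldots,f(e_i)\ra$ containing an undefined entry $f(e_i)$, which does not exist by strictness. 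Hence only the query-free case (with no queries) can witness definedness, and the relativized application collapses to
\[
a\cdot_f b\darrow = c \;\Longleftrightarrow\; a\cdot\la b\ra = \la\true,c\ra ,
\]
where $\cdot$ is the application of $\A$.

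Using this, suppose toward a contradiction that some $c^*$ is a total $0$-$1$-valued separation of $0,1$ in $\A[f]$, so that $c^*\cdot_f\bar 0 = \bar 0$ and $c^*\cdot_f\bar 1 = \bar 1$. Totality of $c^*$ in $\A[f]$ together with the displayed equivalence gives $c^*\la a\ra = \la\true,v_a\ra$ in $\A$ for every $a$, where $v_a = c^*\cdot_f a\in\{\bar 0,\bar 1\}$. Then the element $c' = \lambda^* x.\,(c^*\la x\ra)\U^2_2$ of $\A$ is total and $0$-$1$-valued and satisfies $c'\bar 0 = \bar 0$, $c'\bar 1 = \bar 1$; in other words $c'$ separates $0,1$ in $\A$, contradicting Corollary~\ref{cor}. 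The step I expect to require the most care is the identification of the numerals (and the projection $\U^2_2$) of $\A[f]$ with those of $\A$: since $\A[f]$ shares the underlying set of $\A$ but carries a different application operator, one must check that $\A$ sits inside $\A[f]$ as a sub-pca preserving the combinators $k,s$ on the nose, so that a separation in $\A[f]$ genuinely pulls back to one in $\A$. This is precisely the ``$\A[f]$ contains $\A$ in a natural way'' of section~\ref{sec:relativization}, but it is the one point that should be spelled out rather than taken for granted; once it is in place, the ``no oracle query can succeed'' observation does all the real work.
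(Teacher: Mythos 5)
Your overall strategy is the paper's own: start from a nontrivial total pca $\A$, pass to the relativized pca $\A[f]$ of section~\ref{sec:relativization}, and derive a contradiction with Corollary~\ref{cor} by pulling a hypothetical separation in $\A[f]$ back to $\A$. The paper chooses $f$ \emph{representable} in $\A$ (so nontotality of $\A[f]$ can be cited from van Oosten, and total functions of $\A[f]$ are simulated in $\A$ by replacing oracle queries by computations), whereas you choose $f$ nowhere defined, prove nontotality by hand, and obtain the pullback from the observation that $\cdot_f$ collapses to query-free computation: $a\cdot_f b\darrow = c$ iff $a\la b\ra = \la\true,c\ra$. That collapse is correct and makes your pullback more concrete than the paper's simulation argument. (A minor quibble: your $r$ need not satisfy the second clause $r\la b,w\ra=\la\true,w\ra$; a single element cannot in general tell $1$-tuples from $2$-tuples. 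But nontotality only needs the first clause, and an $r$ with $r\la b\ra=\la\false,b\ra$ exists by combinatory completeness since $\la b\ra i = b$.)

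However, the step you flag at the end is a genuine gap, and the repair you propose for it is wrong. It is \emph{not} true that $\A$ sits inside $\A[f]$ as a sub-pca with the same combinators: the $k$ and $s$ of $\A$ are not combinators for $\cdot_f$ (under your collapse, $k\cdot_f a$ is undefined unless $k\la a\ra$ happens to have the form $\la\true,c\ra$), so the numerals of $\A[f]$, built by $\lambda^*$-terms from $k_f,s_f$ using $\cdot_f$, are in general \emph{different elements} of the common carrier than the numerals of $\A$. Consequently your $c'$ does not satisfy $c'\bar 0=\bar 0$ for the numerals of $\A$; what it actually does is separate the pair consisting of the numerals $\bar 0,\bar 1$ \emph{of $\A[f]$}, viewed as two elements of $\A$. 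The proof is nevertheless easily repaired, in the way the paper itself indicates: by the remark following Definition~\ref{def:sep}, separability -- and hence Proposition~\ref{prop:inseparable}, Theorem~\ref{thm:total}, and Corollary~\ref{cor} -- applies verbatim to \emph{any} pair of distinct elements, not just to the official numerals. Since $\A[f]$ is a nontrivial pca, its two numerals are distinct, so your $c'$ separates a pair of distinct elements in the total pca $\A$, contradicting this form of Corollary~\ref{cor}. With that substitution your argument is complete; note that the paper's own proof makes the same implicit move when it asserts that a separation in $\A[f]$ yields one in $\A$.
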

\begin{proof}
Let $\A$ be any nontrivial 
total pca, and let $f$ be representable in $\A$. 
(We can simply take $f\in\A$.) 
We use the construction of the relativized pca $\A[f]$ from 
section~\ref{sec:relativization}. 
By van Oosten~\cite[Corollary 2.3]{vanOosten2006}, 
the pca $\A[f]$ is  never total, even if $\A$ is total. 
(This is due to the different interpretation of application in $\A[f]$, 
which allows for the definition of undefined functions.)
It is easy to verify that $\A[f]$ has the same total functions as $\A$.
(Simply replace queries to the oracle $f$ by computations in $\A$.)
So if $0$ and $1$ are separable in $\A[f]$ by a total 0-1-valued function $c$, 
then the same must hold in $\A$. But $\A$ is total, hence by 
Corollary~\ref{cor}, $0$,$1$ are inseparable in $\A$.
\end{proof}

Consider the following statements about a pca $\A$:
\begin{enumerate}[\rm (i)]

\item $0$,$1$ are separable in $\A$.

\item The function $ba = aa$ has no total extension in $\A$.

\item There exists an element in $\A$ without total extension in $\A$.

\item $\A$ is not total.

\end{enumerate}

We have (i)$\Rightarrow$(ii)$\Rightarrow$(iii)$\Rightarrow$(iv):
The first implication follows from the proof of Theorem~\ref{thm:total},
and the others are obvious.
In fact, (ii)$\Leftrightarrow$(iii), as can be seen as follows.
The application function $d\la a,b\ra = ab$ is universal, so it suffices to prove that
if $ba=aa$ has a total extension, then so has $d$.
Suppose that $f$ is a total extension of $b$. Then
$$
ab\darrow \Longleftrightarrow g\la a,b\ra ( g\la a,b\ra )\darrow = f(g\la a,b\ra)
$$
so $f(g\la a,b\ra)$ is a total extension of~$d$.

By Theorem~\ref{thm:nontotalexample} we have that (iv)$\not\Rightarrow$(i), 
but we can in fact say more.
In section~\ref{sec:Kreisel} we discussed Kreisel's suggestion for 
constructing a nontotal extensional pca from a Friedberg numbering. 
Despite the failure of this (Corollary~\ref{cor:imp}), 
such pca's $\A$ do exist, as was proven in 
Bethke and Klop~\cite{BethkeKlop}.
Since $\A$ is extensional, every element in $\A$ has a total extension in $\A$, 
as was proven in \cite{BarendregtTerwijn2}.\footnote{
It follows from Proposition~5.2 in \cite{BarendregtTerwijn2} that 
if $\A$ is extensional then the identity on $\A$ is precomplete, 
which is equivalent to the statement that  
every element in $\A$ has a total extension in $\A$.}
Since $\A$ is nontotal, we have (iv)$\not\Rightarrow$(iii).
At the moment we do not know whether (ii)$\not\Rightarrow$(i).

The negation of item (iii) does not imply that $\A$ has a total completion in 
the sense of Bethke et al.~\cite{BethkeKlopdeVrijer}, as one might think. 
Indeed, $\neg$(iii) implies that in particular the application 
function $d\la a,b\ra = ab$ has a total extension $h\in \A$, 
but this total extension $h$ does not have to respect the 
structure of the combinator~$s$. 
In fact, if we let $\A$ be nontotal and extensional as above, 
by extensionality $\neg$(iii) holds in $\A$ 
(cf.\ \cite[Proposition 5.2]{BarendregtTerwijn2}), 
but $\A$ is not completable by \cite[Theorem 7.2]{BethkeKlopdeVrijer}.

\section{Precompleteness and 1-1 numberings}\label{sec:numberings}

In this section we consider numberings without repetitions, 
often simply called {\em 1-1 numberings}.

A numbering of the p.c.\ functions that is equivalent to 
the standard numbering is called {\em acceptable\/} \cite[p215]{Odifreddi}.
Rogers~\cite{Rogers1967} showed that acceptable numberings are precisely 
those for which the enumeration theorem and parametrization 
(= the S-m-n-theorem) hold.
It also follows from this that for any acceptable numbering the 
padding lemma holds, ensuring that every p.c.\ function has infinitely 
many codes. 
In particular, we see that no 1-1 numbering of the p.c.\ 
functions (such as Friedberg's numbering) is acceptable. 
For more on 1-1-numberings see Kummer~\cite{Kummer}.

A general theory of countable numberings was initiated by 
Ershov \cite{Ershov}. 
A numbering of a set $S$ is simply a surjective function 
$\gamma:\omega\rightarrow S$. 
In particular, Ershov  introduced the notion of a {\em precomplete\/}
numbering on~$\omega$,
and he proved in \cite{Ershov2} that Kleene's recursion theorem 
holds for every precomplete numbering.
Barendregt and Terwijn \cite{BarendregtTerwijn} extended the 
setting to partial combinatory algebra by defining the notion 
of a {\em generalized numbering\/} as a surjective function 
$\gamma:\A\rightarrow S$, where $\A$ is a pca and $S$ is a set. 
The notion of precompleteness for generalized numberings was 
also defined in \cite{BarendregtTerwijn}. It is equivalent to the 
following definition:

\begin{definition} \label{def:precomplete}
A generalized numbering $\gamma \colon \A \rightarrow S$ is
{\em precomplete\/} if for every $b{\in} \A$
there exists a total element $f{\in} \A$ such that
for all $a{\in} \A$,
\begin{equation} \label{precomplete2}
b{a}\darrow \; \Longrightarrow \; f{a} \sim_\gamma b{a}. 
\end{equation}
In this case, we say that {\em $f$ totalizes $b$ modulo~$\sim_\gamma$\/}.
\end{definition}

Ershov's notion of precomplete numbering is obtained from this 
by taking for $\A$ Kleene's first model $\K_1$.
Section 5 of \cite{BarendregtTerwijn2} studies the relations between 
combinatory completeness,
extensionality, and precompleteness of generalized numberings.

The standard numbering of the p.c.\ functions is precomplete by the 
S-m-n-theorem, and since every acceptable numbering is equivalent to 
the standard numbering it follows that 
acceptable numberings are precomplete. 
On the other hand, Friedberg's 1-1 numbering is not precomplete. 
We generalize this fact in Theorem~\ref{precomplete1-1} below.

Precompleteness is connected to the question which elements have total 
extensions, studied in section~\ref{sec:tot}.
For example, the identity $\gamma_\A : \A\rightarrow\A$ is precomplete
if and only if every element $b\in\A$ has a total extension $f\in\A$.

\begin{theorem} \label{precomplete1-1}
Suppose $\gamma:\A\rightarrow S$ is a precomplete generalized numbering, 
and that $0$,$1$ are separable in $\A$. Then $\gamma$ is not 1-1. 
\end{theorem}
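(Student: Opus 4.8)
The plan is to assume for contradiction that $\gamma$ is a precomplete $1$-$1$ numbering with $0,1$ separable in $\A$, and to derive a contradiction with the computable inseparability of the sets $A=\{a : aa\darrow = 0\}$ and $B=\{a : aa\darrow = 1\}$ from Proposition~\ref{prop:inseparable}. The key observation is that when $\gamma$ is $1$-$1$, the relation $\sim_\gamma$ is just genuine equality on $\A$, so the totalization condition \eqref{precomplete2} collapses to the statement that every $b$ has an \emph{honest} total extension $f$ (with $fa = ba$ whenever $ba\darrow$), not merely one agreeing modulo $\sim_\gamma$. This is exactly the hypothesis that was ruled out in Theorem~\ref{thm:total}.

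More precisely, I would first take the diagonal element $b$ with $ba \simeq aa$, which exists by combinatory completeness applied to $t(x)=xx$. Since $\gamma$ is precomplete, there is a total $f\in\A$ with $ba\darrow \Rightarrow fa \sim_\gamma ba$. The crucial step is to argue that $\gamma$ being $1$-$1$ forces $\sim_\gamma$ to coincide with $=$: if $\gamma(u)=\gamma(v)$ then injectivity gives $u=v$. Hence $fa \sim_\gamma ba$ becomes $fa = ba$, so $f$ is a genuine total extension of $b$ in the sense of Definition of total extension. This is precisely statement (ii) failing, and it contradicts Theorem~\ref{thm:total}, which says that separability of $0,1$ guarantees $b$ has \emph{no} total extension.

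The main obstacle, and the point requiring care, is justifying that $\sim_\gamma$ reduces to equality. The notation $\sim_\gamma$ means $u \sim_\gamma v \iff \gamma(u)=\gamma(v)$, so this reduction is immediate from injectivity of $\gamma$; I should state it explicitly since the whole argument hinges on it. Once that is in place, the proof is essentially a one-line appeal to the already-established machinery: precompleteness plus $1$-$1$-ness yields a total extension of the squaring element $b$, while separability of $0,1$ forbids any such extension via Theorem~\ref{thm:total}, giving the desired contradiction. I expect no computational difficulty; the entire content is the conceptual identification of $\sim_\gamma$ with equality and the invocation of the inseparability result.
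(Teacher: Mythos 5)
Your proposal is correct and follows essentially the same route as the paper: injectivity of $\gamma$ collapses $\sim_\gamma$ to equality, so precompleteness would give every element (in particular the diagonal element $ba \simeq aa$) an honest total extension, contradicting Theorem~\ref{thm:total} under the separability hypothesis. The only cosmetic difference is that you instantiate at the diagonal element explicitly, while the paper applies the argument to an arbitrary $b$ and then cites Theorem~\ref{thm:total} wholesale; the substance is identical.
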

\begin{proof}
Suppose that $\gamma$ is precomplete and 1-1, 
and suppose that $b\in\A$. 
Since $\gamma$ is precomplete, there exists $f\in \A$ that totalizes 
$b$ modulo $\sim_\gamma$. As $\gamma$ is 1-1 we have 
$$
ba\darrow \, \Longrightarrow fa \sim_\gamma ba  \Longrightarrow fa = ba
$$
for every~$a$. Hence every $b\in \A$ has a total extension $f\in\A$. 
But this contradicts Theorem~\ref{thm:total}.
\end{proof}

\section{Arslanov's completeness criterion}\label{sec:Arslanov}

Ershov~\cite{Ershov2} showed that Kleene's recursion theorem holds 
for any precomplete numbering $\gamma:\omega\rightarrow S$. 
Working in another direction, Feferman~\cite{Feferman} proved that the 
recursion theorem holds in any pca $\A$. 
In Barendregt and Terwijn~\cite{BarendregtTerwijn}, the fixed point theorems 
of Ershov and Feferman were combined by proving a fixed point theorem for 
precomplete {\em generalized\/} numberings $\gamma:\A\rightarrow S$, 
that instead of $\omega$ have an arbitrary pca $\A$ as a basis. 
The following diagram summarizes the various possible settings of 
the recursion theorem. 
$$
\begin{array}{ccc}
\makebox[0.2cm][r]{pca $\A$} 
& \longrightarrow & 
\makebox[2.2cm][l]{$\gamma:\A\rightarrow S$ generalized numbering}\\
& &  \\
\uparrow  &   & \uparrow \\
& & \\
\omega & \longrightarrow & 
\makebox[2.1cm][l]{$\gamma:\omega\rightarrow S$ numbering}\\
\end{array}
$$

Now another famous extension of the recursion theorem is 
Arslanov's completeness criterion \cite{Arslanov}, which extends 
the recursion theorem from computable functions to the class of all 
functions that are computable from a Turing-incomplete c.e.\ set. 
Explicitly, suppose that $A\subseteq \omega$ is a c.e.\ set such 
that $K\not\leq_T A$, and suppose that $f$ is an $A$-computable 
function. Then there exists $e\in\omega$ such that for all $x\in\omega$,
$$
\vph_{f(e)}(x) \simeq \vph_e(x).
$$
In Barendregt and Terwijn~\cite{BarendregtTerwijn} it was shown that 
Arslanov's completeness criterion also holds for any precomplete 
numbering. (In contrast to this, it is open whether the joint generalization 
from \cite{Terwijn} also holds for every precomplete numbering.) 
This prompts the question whether Arslanov's completeness criterion 
also holds for generalized numberings. A first step would be to prove 
an analog of Arslanov's result for pca's. Using the concepts of 
section~\ref{sec:relativization}, we can formulate such an analog as 
follows. 

Let $\A$ be a pca, and suppose that $A$ is c.e.\ in $\A$ such that 
$K\notin \A[A]$, where $K$ is the halting set in $\A$ defined 
in section~\ref{sec:relativization}.  
Note that this is the analog of of stating that $A$ is a c.e.\ set 
that is not Turing complete. 
Now Arslanov's result says that any $A$-computable function $f$ has a 
fixed point, which translates to the following. 
Suppose that $f\in \A[A]$ is total. Then there exists $e\in \A$ such 
that for all $x\in \A$, 
$$
f \cdot_A e \cdot x \simeq e\cdot x.
$$
Here $\cdot_A$ denotes application in $\A[A]$ and 
$\cdot$ denotes application in $\A$.

\begin{question}
Does this analog of Arslanov's completeness criterion hold for 
every pca?
\end{question}

\end{document}